\author{J. Aponte}
\address{ESPOL Polytechnic University\\
Escuela Superior Politécnica del Litoral, ESPOL\\
Facultad de Ciencias Naturales y Matemáticas\\
Campus Gustavo Galindo, Km. 30.5 Vía Perimetral\\
P.O. Box 09-01-5863, Guayaquil, Ecuador}
\email{japonte@espol.edu.ec}
\author{D. Carrasco-Olivera}
\address{Departamento de Matem\'atica, Facultad de Ciencias, Universidad
del B\'{i}o-B\'{i}o, Casilla 5-C, Concepc\'{i}on. VIII-region, Chile.}
\email{dcarrasc@ubiobio.cl}
\author{H. Villavicencio}
\address{Instituto de Matem\'atica y Ciencias Afines\\ Universidad Nacional de Ingenier\'ia, Lima, Per\'u.}
\email{hvillavicencio@imca.edu.pe}
\keywords{distal homeomorphism, inner-distal homeomorphism, inner-distal
measure, proximal
cell}
\subjclass[2010]{Primary 37B05, 54H05; Secondary 37B45}
\date{\today}
\DeclareMathOperator{\Per}{Per}
\DeclareMathOperator{\Fix}{Fix}
\DeclareMathOperator{\diam}{diam}
\DeclareMathOperator{\Int}{Int}
\DeclareMathOperator{\supp}{supp}
\theoremstyle{definition}
\newtheorem{definition}{Definition}[section]
\newtheorem{example}[definition]{Example}
\theoremstyle{plain}
\newtheorem{theorem}[definition]{Theorem}
\newtheorem{lemma}[definition]{Lemma}
\newtheorem{corollary}[definition]{Corollary}
\theoremstyle{remark}
\newtheorem{remark}[definition]{Remark}
\title{Inner-distal Homeomorphisms and Measures}
\begin{document}

\begin{abstract}
    An inner-distal homeomorphism is one such that each of its proximal cells has empty interior. In locally connected spaces, we prove  these homeomorphisms have the following properties: Every $cw$-distal homeomorphism is inner-distal but not conversely.  The inner-distal homeomorphisms are precisely those for which the diameters of the iterates of every connected subset of the phase space remain uniformly bounded away from zero. If, in addition, the phase space is compact, every distal extension of an inner-distal-homeomorphism under an inner-light map is inner-distal. We also study inner-distal homeomorphisms from the measure-theoretical standpoint through the concept of inner-distal measures. We prove that in Polish metric spaces, a homeomorphism is inner-distal if and only if each Borel probability measure is. In compact metric spaces, the existence of inner-distal measures guarantees the existence of an invariant inner-distal one. In compact, locally connected spaces, the minimal homeomorphisms supporting inner-distal homeomorphism are precisely the inner-distal ones. We also classify the inner-distal homeomorphisms of the circle or the interval. Finally, we give some dynamic consequences of inner-distality.
\end{abstract}
	\maketitle
 
	\section{Introduction}
	\noindent Let $f\colon X\to X$ be a homeomorphism of a metric space $(X,d)$. We say that $f$ is \emph{distal} if
	\[
	\forall\,x,y\in X,\ \inf_{n\in \mathbb{Z}} d(f^n(x),f^n(y))=0\implies x=y.
	\]
	Equivalently, $f$ is distal if $\forall\,x\in X,\ \mathcal{P}(x)=\{x\}$, where 
	\[
	\mathcal{P}(x):=\{y\in X\colon \inf_{n\in\mathbb{Z}}d(f^n(x),f^n(y))=0\}
	\]
	is \emph{the proximal cell} of $x$ \cite{auslander1988} (sometimes written $\mathcal{P}_f(x)$ to indicate dependence on $f$).  This concept was introduced by Hilbert, who apparently sought a topological equivalent of the concept of a rigid group of motions of space. It is the weaker condition one could impose on the group of homeomorphisms $\{f^n\}_{n\in \mathbb{Z}}$ of $X$ so that this group behaves similarly to a group of rigid motions (see \cite{zippin}). The word ``distal" itself was coined by W. H. Gottschalk (see \cite{gottschalk1955}). Distal homeomorphisms have been studied extensively in the literature. For instance, R. Ellis showed that, under certain hypotheses on $X$, being distal is equivalent to saying that the topological closure of $\{f^n\}_{\in\mathbb{Z}}$ in $X^X$  has a group structure (see \cite{ellis1958}). Another quite remarkable result concerning distal homeomorphisms is the so-called Furstenberg Structure Theorem \cite{furstenberg1963}, which asserts that every minimal distal homeomorphism can be obtained by a (possibly transfinite) sequence of isometric extensions starting with the one-point dynamical system. Albeit its generality, distal homeomorphisms also have very nice recurrence properties. It is known that every distal homeomorphism is \emph{pointwise almost periodic}, that is, for every $x\in X$ and every $\varepsilon>0$, the set $\{n\in\mathbb{Z}\colon d(f^n(x),x)<\varepsilon\}$ does not have arbitrarily large gaps. Distal homeomorphisms also have very tidy dynamics, as they are known to have zero topological entropy \cite{parry1967}.
 
 In recent years, various generalizations of the notion of distality have emerged by varying the structure of the proximal cells. For example, by asking that the cardinal of the proximal cells  be less than or equal to a natural number $N$, we obtain the $N$-distal homeomorphisms \cite{AC-OLM2020,rego_salcedo_2020}. If we ask the proximal cells to be countable, we obtain countably distal homeomorphisms \cite{LM2017}. If we go further and ask that the proximal cells be totally disconnected, we will obtain the $cw$-distal homeomorphisms  \cite{AC-OLM2020}. On the other side of the dynamic behavior spectrum are expansive homeomorphisms introduced by Utz in \cite{utz1950}. We say that $f\colon X\to X$ is \emph{expansive} if there is some $\delta>0$, called the \emph{expansivity constant of $f$}, satisfying 
	\[
	\forall\,x,y\in X,\forall\,n\in\mathbb{Z},\ d(f^n(x),f^n(y))\leq \delta\implies x=y.
	\]
Equivalently, $f$ is expansive if there is $\delta>0$ such that $\Gamma_\delta(x)=\{x\}$, for all $x\in X$, where
\[
\Gamma_\delta(x):=\{y\in X\colon \forall\,n\in\mathbb{Z},\ d(f^n(x),f^n(y))\leq \delta\}.
\]
The set $\Gamma_\delta(x)$ is called the \emph{dynamic ball of radius $\delta$ and center at $x$}. The authors in \cite{bautista2018} presented a possible link between descriptive set theory and expansive systems. They defined \emph{meagre-expansive homeomorphisms} as those homeomorphisms $f\colon X\to X$ for which there exists a constant $\delta>0$, called the \emph{meagre-expansivity constant of $f$}, such that $\forall\,x\in X,\ \Int \Gamma_\delta(x)=\emptyset$, where $\Int(\cdot)$ denotes the interior operation. This paper pursues a similar link between descriptive set theory and distal systems. This link will be the concept of \emph{inner-distal homeomorphism} and \emph{inner-distal measure}.

 This paper is organized as follows. In Section 2, we present several examples and the main theorems. In Section 3, we present some preliminary results which are interesting in their own right. In Section 4, we prove our theorems.
\section{Examples and Statements of Results}
\noindent We now state the first main definition of this work. 
	\begin{definition}
		A homeomorphism $f\colon X\to X$ is said to be \emph{inner-distal} if 
  \[
 \forall\,x\in X,\ \Int\mathcal{P}(x)=\emptyset.
  \]
	\end{definition}
The following example illustrates this definition. Remind that a metric space is \emph{self-dense} if it does not have isolated points.
\begin{example}
Every distal homeomorphism on a self-dense metric space is inner-distal. This is false if the space is not self-dense. In fact, if $X=Y\cup\{p\}$, where $p$ is an isolated point and $f\colon  X\to X$ is a homeomorphism such that $f(p)=p$ and $f|_Y\colon Y\to Y$ is distal, then $f$ is distal, but it is not inner-distal since $\Int\mathcal{P}(p)=\Int \{p\}=\{p\}\neq \emptyset$. On the other hand, every metric space supporting inner-distal homeomorphisms is self-dense.
\end{example}

Now, we show an example of an inner-distal homeomorphism that is not distal. 
\begin{example}\label{example_1.3}
	Let $X=[0,1]\times C$, where $C$ is the ternary Cantor set of $[0,1]$. Let $f\colon X\to X$ be the map defined in the proof of example 1.7 in \cite{AC-OLM2020}, which is known to be not distal. Then, $\mathcal{P}(x,y)\subseteq \{x\}\times C$, for every $(x,y)\in X$. Therefore, $\Int \mathcal{P}(x,y)\subseteq \Int  (\{x\}\times C)=\emptyset$. Then, $f$ is inner-distal.
\end{example}
\begin{example}
More generally, if $f\colon X\to X$ or $g\colon Y\to Y$ is an inner-distal homeomorphism, then the product $f\times g\colon X\times Y\to X\times Y$ is also inner-distal. Indeed, it is easy to see that $\mathcal{P}_{f\times g}(x,y)\subseteq \mathcal{P}_{f}(x)\times \mathcal{P}_g(y)$, for every $(x,y)\in X\times Y$. Therefore, if $f$, say, is inner-distal, then $\Int(\mathcal{P}(x)\times \mathcal{P}(y))=\Int\mathcal{P}_f(x)\times \Int\mathcal{P}_g(y)=\emptyset\times\Int\mathcal{P}_g(y)=\emptyset$, so $\Int\mathcal{P}_{f\times g}(x,y)=\emptyset$, for every $(x,y)\in X\times Y$ and $f\times g$ is inner-distal.
\end{example}
The homeomorphism of Example \ref{example_1.3} is also an example of $cw$-\emph{distal} homeomorphisms \cite{AC-OLM2020}, i.e., homeomorphisms $f\colon X\to X$ satisfying the implication
\[
\inf_{n\in\mathbb{Z}}\diam f^{n}(C)=0\implies C\ \text{reduces to a singleton},
\] 
for every non-empty compact connected $C\subseteq X$. Theorem 1.3 in \cite{AC-OLM2020} says that the $cw$-distal homeomorphisms of a compact metric space are precisely those for which the proximal cell of each point is a totally disconnected set. It is natural to ask if every $cw$-\emph{distal} homeomorphism defined on a self-dense space is inner-distal, but this is not true, as the following example shows. Denote by $\mathcal{O}(x)$ the set $\{f^n(x)\colon n\in\mathbb{Z}\}$, the so-called \emph{orbit} of $x$. Denote by $\overline{(\cdot)}$ the closure operation.

\begin{example}
Let $C$ be the classical ternary Cantor set and let $f\colon C\to C$ be a homeomorphism such that $f(0)=0$, $f(1)=1$, and $\overline{\mathcal{O}(x)}=C$ for all $x\in C\setminus\{0,1\}$ (see \cite{gutek1984}). Clearly, $f$ is $cw$-distal, but  is not inner-distal since $\Int \mathcal{P}(0)=C\setminus\{1\}$.
\end{example}

However, in self-dense locally connected metric spaces, $cw$-distal homeomorphisms are inner-distal homeomorphisms.
\begin{example}\label{cwimpliesmeagre}
	Every $cw$-distal homeomorphism $f$ defined on a self-dense locally connected metric space $X$ is inner-distal. Indeed, suppose that for some $x\in X$, $\Int\mathcal{P}(x)\neq \emptyset$. Pick some $y\in\Int\mathcal{P}(x)$ and let $U$ be an open connected neighborhood  of $y$ contained in $\mathcal{P}(x)$. Since $\mathcal{P}(x)$ is totally disconnected, it follows that $U=\{y\}$, but this would imply that $\{y\}$ is an isolated point of $X$, an absurd. Therefore, $f$ is inner-distal.
\end{example}

The converse of this example is not valid in virtue of the following example, which shows that there are inner-distal homeomorphisms defined on locally connected compact metric spaces that are not $cw$-distal.
\begin{example}\label{inner-distal}
	Let $X=[0,1]\times \mathbb{S}^1$, where $\mathbb{S}^1$ denotes the unit circle, and let $f\colon X\to X$ be the map given by
	\[
	f(x,y)=(g(x),h(y)),
	\]
	where $g(x)=\sqrt{x}$ and $h\colon \mathbb{S}^1\to\mathbb{S}^1$ is any distal map (for instance, a rotation). Since for all $x\in (0,1)$, $g^{n}(x)\to 1$, $n\to \infty$, and $g^{n}(x)\to 0$, $n\to -\infty$, it follows that 
	\[
	\mathcal{P}(x,y)={[0,1]\times \{y\}},
	\]
 for $(x,y)\in (0,1)\times S^{1}$. Since $g(0)=0$ and $g(1)=1$, we also have
 \[
\mathcal{P}(0,y)=[0,1)\times \{y\}\ \text{and }\mathcal{P}(1,y)=(0,1]\times \{y\}
 \]
	Hence, $\forall\,(x,y)\in [0,1]\times\mathbb{S}^1,\ \Int\mathcal{P}_f(x,y)=\emptyset$. Therefore, $f$ is inner-distal, but it is not $cw$-distal.
\end{example}
It is known that an expansive homeomorphism of an infinite compact metric
space cannot be distal (e.g., \cite{utz1950}). In \cite{AC-OLM2020}, the authors proved that a $cw$-\emph{expansive} homeomorphism $f\colon X\to X$, i.e., a homeomorphism satisfying the implication
\[
\sup_{n\in\mathbb{Z}}\diam f^{n}(C)=0\implies C\ \text{reduces to a singleton},
\] 
for every non-empty compact connected $C\subseteq X$, cannot be $cw$-distal either, as long as $X$ has positive topological dimension. It is natural to believe that a meagre-expansive homeomorphism defined on an infinite compact metric space cannot be inner-distal. However, this is not true, even in compact metric spaces of positive topological dimension, as per the following example.
\begin{example}
Let $\mathbb{T}^2$ denote the 2-torus and let $g\colon\mathbb{T}^2\to \mathbb{T}^2$ be any expansive homeomorphism (e.g., an Anosov diffeomorphism). Consider the map $\mathrm{Id}_{\mathbb{S}^1}\times g\colon \mathbb{S}^1\times \mathbb{T}^2\to\mathbb{S}^1\times \mathbb{T}^2$. It is known that this map is meagre-expansive (see Proof of Theorem 1.1 in \cite{bautista2018}). This map is also inner-distal, because for each $(x,y)\in \mathbb{S}^1\times \mathbb{T}^2$ one has $\mathcal{P}(x,y)\subseteq \mathcal{P}_{\mathrm{Id}_{\mathbb{S}^1}}(x)\times \mathcal{P}_g(y)=\{x\}\times \mathcal{P}_g(y)$, whence $\Int\mathcal{P}(x,y)=\emptyset$.
\end{example}
Our first result gives a dynamic characterization of inner-distal homeomorphisms on self-dense locally connected metric spaces in the spirit of Theorem 1.3 of \cite{AC-OLM2020}. 
\begin{theorem}\label{charac_meagre}
	Let $X$ be  a self-dense  metric space and let $f\colon X\to X$. A necessary condition for $f$  to be inner-distal is
	\[ \inf_{n\in\mathbb{Z}}\diam f^n{(C)}=0\implies \Int C=\emptyset,
\]
for every connected subset $C$ of $X$. If, in addition, $X$ is locally connected, this condition is also sufficient.
\end{theorem}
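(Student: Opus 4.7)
I treat the two directions separately, with necessity being essentially immediate and sufficiency requiring the local connectedness hypothesis in an essential way.

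\emph{Necessity.} Suppose $f$ is inner-distal, and let $C \subseteq X$ be a connected set with $\inf_{n \in \mathbb{Z}} \diam f^n(C) = 0$. If $C$ is empty there is nothing to prove; otherwise fix $x \in C$. For every $y \in C$ and every $n$, $d(f^n(x), f^n(y)) \le \diam f^n(C)$, so $\inf_n d(f^n(x), f^n(y)) = 0$, whence $C \subseteq \mathcal{P}(x)$. Inner-distality gives $\Int \mathcal{P}(x) = \emptyset$, so $\Int C = \emptyset$. (Notice that connectedness of $C$ is not actually used in this direction.)

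\emph{Sufficiency.} Now assume $X$ is locally connected and the displayed implication holds for every connected $C$; I argue by contrapositive. Suppose $f$ is not inner-distal, so there is $x_0 \in X$ with $\Int \mathcal{P}(x_0) \neq \emptyset$. Choose $y_0 \in \Int \mathcal{P}(x_0)$ and, using local connectedness, a connected open neighborhood $U$ of $y_0$ with $U \subseteq \mathcal{P}(x_0)$. The goal is to produce a connected subset $C \subseteq X$ with $\Int C \neq \emptyset$ and $\inf_n \diam f^n(C) = 0$, which contradicts the standing hypothesis.

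The construction of $C$ is carried out inside $U$. For each integer $k \ge 1$ the closed sets $B_n^k := \{y \in U : d(f^n(y), f^n(x_0)) \le 1/k\}$ cover $U$ as $n$ ranges over $\mathbb{Z}$, because $U \subseteq \mathcal{P}(x_0)$. A Baire-category argument applied in $U$ (justified by the preliminary machinery of Section~3) produces, for each $k$, an index $n_k$ and a connected open set $V_k \subseteq U$ with $V_k \subseteq B_{n_k}^k$, so that $\diam f^{n_k}(V_k) \le 2/k$. Iterating this construction inside each previous $V_{k-1}$ and invoking local connectedness at every stage, I can arrange the $V_k$ to be nested and to share a common point $y^*$. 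A uniformisation step then extracts from this nested data a connected set $C \subseteq U$ with $\Int C \neq \emptyset$ along which $\diam f^{n_k}(C) \to 0$, contradicting the hypothesis.

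\emph{Principal obstacle.} The delicate point is exactly this final uniformisation. The naive intersection $\bigcap_k V_k$ tends to collapse to the singleton $\{y^*\}$, which has empty interior and so gives no contradiction. Producing a genuine open connected set on which all iterates $f^{n_k}$ simultaneously contract requires more than proximality of each individual point of $U$ to $x_0$; it needs a controlled propagation of this proximality across an open set, and it is here that local connectedness of $X$ must do real work (as the Cantor-set example after Theorem~1.3 of \cite{AC-OLM2020} shows, without local connectedness the converse genuinely fails). I expect Section~3 to formalise this uniformisation as a stand-alone lemma, from which the sufficiency direction then follows cleanly.
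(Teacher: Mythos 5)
Your necessity argument coincides with the paper's and is correct (including your observation that connectedness plays no role in that direction). The sufficiency direction, however, has a genuine gap, and it is exactly the step you flag: the ``uniformisation'' you defer to a hoped-for lemma \emph{is} the entire content of this direction, and no such lemma appears in the paper's Section~3 (the preliminaries there concern inner-distal measures, $\mu$-Baire spaces, and the like --- nothing of this kind). Worse, the scaffolding you erect toward it is itself unavailable: the theorem assumes only that $X$ is a self-dense, locally connected metric space, with no completeness or compactness, so the Baire-category argument you invoke inside $U$ to extract the connected open sets $V_k$ from the closed cover $\{B_n^k\}_{n\in\mathbb{Z}}$ has no justification in this generality. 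And, as you yourself concede, even granting the nested $V_k$ the construction collapses to a single point $y^*$, so it never produces the contradiction.

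The missing idea is that no category argument and no nesting are needed, because proximality on a connected set self-synchronizes. Fix $\varepsilon>0$ and, for $y\in U\subseteq\mathcal{P}(x_0)$, let $N(y)=\sup\{n\in\mathbb{N}\colon d(f^i(x_0),f^i(y))>\varepsilon\ \text{for all}\ -n\le i\le n\}$; this is finite precisely because $y\in\mathcal{P}(x_0)$, and (as in \cite{AC-OLM2020}, which the paper cites for this step) it is a continuous integer-valued function on $U$, hence constant, say $N(y)\equiv N$, since $U$ is connected. Thus at time $N+1$ (up to the sign of the iterate) \emph{every} point of $U$ comes $\varepsilon$-close to the orbit of $x_0$ simultaneously, giving $\diam f^{N+1}(U)\le 2\varepsilon$ for the open set $U$ itself. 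Since $\varepsilon>0$ was arbitrary, $\inf_{n\in\mathbb{Z}}\diam f^n(U)=0$ while $\Int U=U\neq\emptyset$, contradicting the hypothesis immediately. In other words, the ``controlled propagation of proximality across an open set'' you correctly identify as the crux is delivered for free by the constancy of a first-approach-time function on a connected set; local connectedness is used exactly once, to produce the connected open $U$ inside $\Int\mathcal{P}(x_0)$, and your appeal to the Cantor-set example for why it is indispensable is consistent with this. Your proposal, as it stands, does not prove the sufficiency direction.
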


For our next example, we first recall some definitions. A subset $C\subseteq X$ is \emph{nowhere-dense} if $\Int \overline{C}=\emptyset$. A \emph{continuum} is a non-empty compact connected space $X$. A \emph{subcontinuum} is a subset $C\subseteq X$, which is a continuum with respect to the subspace topology. 
\begin{example}
The inner-distal homeomorphisms in $\mathbb{S}^1$ and $[0,1]$ are precisely the $cw$-distal  ones. In effect, if $f$ is an inner-distal homeomorphism on any of these compact locally connected spaces, then for every subcontinuum  $C$ such that $\inf_{n\in \mathbb{Z}}\diam f^{n}(C)=0$, one has that $C$ is nowhere-dense, by Theorem \ref{charac_meagre}; still the only non-empty compact connected nowhere-dense  subsets of $\mathbb{S}^1$ and $[0,1]$ are the singletons. Therefore, $f$ is $cw$-distal.
\end{example}

A \emph{homomorphism} from the homeomorphism $g\colon Y\to Y$ to the homeomorphism $f\colon X\to X$ is a continuous surjective map $\pi\colon Y\to X$ satisfying $f\circ\pi=\pi\circ g$. In this case, we say that $g$ is an \emph{extension} of $f$ and that $f$ is a \emph{factor} of $g$. We say that $\pi\colon Y\to X$ is \emph{light} if $\pi^{-1}(x)$ is totally disconnected,  for every $x\in X$. These definitions were used in \cite{AC-OLM2020}, and it was proved that every extension of a $cw$-distal homeomorphism under a light map is $cw$-distal. 

To obtain an inner-distal version of this, observe that $\pi$ is light if and only if for every subcontinuum $C$ in $Y$ such that $h(C)$ reduces to a point, we have that $C$ reduces to a point. Motivated by this observation, we will say that $\pi \colon Y\to X$ is \emph{inner-light} if $\Int C=\emptyset$, for every  subcontinuum $C$ such that $\Int \pi(C)=\emptyset$. With this definition, we have the following result.

\begin{theorem}\label{inner-distal-extensions}
In self-dense  locally connected compact metric spaces, an inner-distal extension of an inner-distal homeomorphism is inner-distal.
\end{theorem}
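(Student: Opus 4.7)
The plan is to apply the characterization in Theorem \ref{charac_meagre} to $Y$. Since $Y$ is compact, self-dense, and locally connected, the sufficiency direction of that theorem reduces the problem to showing that whenever $C\subseteq Y$ is connected with $\inf_{n\in\mathbb{Z}}\diam g^{n}(C)=0$, one has $\Int C=\emptyset$. Fix such a $C$. Because $g$ is a homeomorphism, $g^{n}(\overline{C})=\overline{g^{n}(C)}$, so $\inf_{n\in\mathbb{Z}}\diam g^{n}(\overline{C})=0$, while $\overline{C}$ is a subcontinuum of $Y$ and $\Int C\subseteq\Int\overline{C}$; hence it is enough to treat the case in which $C$ itself is a subcontinuum and to show that such a $C$ has empty interior.

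Next I would transfer the ``collapsing'' behaviour along $\pi$. The image $\pi(C)$ is connected by continuity of $\pi$, and because $Y$ is compact $\pi$ is uniformly continuous, so from the intertwining $\pi\circ g^{n}=f^{n}\circ\pi$ one gets $\diam f^{n}(\pi(C))=\diam\pi(g^{n}(C))\to 0$ along the same subsequence along which $\diam g^{n}(C)\to 0$, i.e. $\inf_{n\in\mathbb{Z}}\diam f^{n}(\pi(C))=0$. The space $X$ is automatically self-dense (every space admitting an inner-distal homeomorphism is, by the remark following the definition), so the necessity direction of Theorem \ref{charac_meagre}, applied to the inner-distal homeomorphism $f$ and to the connected set $\pi(C)$, yields $\Int\pi(C)=\emptyset$.

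Finally, because $\pi$ is inner-light and $C$ is a subcontinuum with $\Int\pi(C)=\emptyset$, the very definition of inner-lightness gives $\Int C=\emptyset$. Plugging this back into the sufficiency direction of Theorem \ref{charac_meagre} for $Y$ completes the proof.

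The one step that genuinely needs care is the diameter transfer $\diam g^{n}(C)\to 0\Rightarrow\diam f^{n}(\pi(C))\to 0$: it rests on uniform continuity of $\pi$, which is exactly what compactness of $Y$ provides, and it is the place where the compactness hypothesis in the statement is actually used. Everything else is a clean chaining of the characterization Theorem \ref{charac_meagre} (used once in each direction), the semi-conjugacy $\pi\circ g=f\circ\pi$, and the definition of inner-light, so I do not anticipate any further obstacles.
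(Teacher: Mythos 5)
Your proof is correct and follows essentially the same route as the paper: reduce via Theorem \ref{charac_meagre} to showing that a collapsing connected set has empty interior, pass to the subcontinuum $\overline{C}$, push the collapsing down to $\pi(\overline{C})$ via the semi-conjugacy, apply the necessity direction of Theorem \ref{charac_meagre} to $f$, and finish with inner-lightness. The only (harmless) difference is that you transfer $\inf_n\diam f^n(\pi(\overline{C}))=0$ by uniform continuity of $\pi$, whereas the paper extracts a subsequence $g^{n_k}(\overline{C})$ converging to a point in the Hausdorff metric and pushes that convergence forward; both uses of compactness are equally valid.
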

Notice that the homeomorphism defined in Example \ref{inner-distal} is indeed an  extension of a distal (hence inner-distal) homeomorphism of the unit circle under an inner-light map. Since the involved spaces are self-dense, compact, and locally-connected, it follows from Theorem \ref{inner-distal-extensions} that the aforementioned homeomorphism is inner-distal.

Next, we analyze inner-distality from the measure-theoretical standpoint. We state our second main definition.
\begin{definition}
	A Borel probability measure $\mu$ of a metric space $X$ is \emph{inner-distal} with respect to a homeomorphism $f\colon X\to X$ (or simply inner-distal) if \[\forall\,x\in X,\ \mu(\Int \mathcal{P}(x))=0.\]
\end{definition}
The following result relates the inner-distal homeomorphisms with inner-distal measures. Compare with Theorem 1.3 in \cite{LM2017}.
\begin{theorem}\label{meagre_dis_meas}
	Homeomorphisms defined on self-dense Polish metric spaces  are inner-distal if and only if every Borel probability measure is  inner-distal.
\end{theorem}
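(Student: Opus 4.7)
The plan is to treat the two implications separately. The forward direction is immediate: if $f$ is inner-distal, then $\Int\mathcal{P}(x)=\emptyset$ for every $x\in X$, so any Borel probability measure $\mu$ on $X$ satisfies $\mu(\Int\mathcal{P}(x))=\mu(\emptyset)=0$ for every $x\in X$, making $\mu$ inner-distal.

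For the backward direction, I would argue by contrapositive. Assume $f$ is not inner-distal; then there exists $x_0\in X$ with $U:=\Int\mathcal{P}(x_0)\neq\emptyset$. Picking any $z\in U$ and forming the Dirac measure $\mu:=\delta_z$, one obtains a Borel probability measure on $X$ with $\mu(\Int\mathcal{P}(x_0))=\delta_z(U)=1\neq 0$. Hence $\mu$ fails the inner-distal condition at the point $x_0$, contradicting the hypothesis that every Borel probability measure on $X$ is inner-distal.

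The Polish and self-dense hypotheses are not strictly required for the Dirac-measure argument just sketched; they become useful only if one prefers to witness failure using a non-atomic measure, since a non-empty Polish self-dense space admits a non-atomic Borel probability measure supported on any prescribed non-empty open set, and this could be applied to $U$. Either way, I do not foresee a substantial technical obstacle. The main conceptual point worth flagging is that the definition of inner-distal measure quantifies universally over $x\in X$, not merely $\mu$-almost surely, and it is this universal quantifier that makes point masses such effective detectors of non-inner-distality.
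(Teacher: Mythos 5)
Your proof is correct, but it takes a genuinely different and more elementary route than the paper. For the backward direction the paper does \emph{not} use a point mass: it assumes $\Int\mathcal{P}(x)\neq\emptyset$, shows this open set must be uncountable (here the self-dense hypothesis enters, via the Baire Category Theorem applied to a putative countable decomposition into singletons), extracts a Cantor set $C\subseteq\Int\mathcal{P}(x)$ using Theorem 2.8 of Parthasarathy (this is where Polish completeness is used), and then takes a \emph{non-atomic} Borel probability measure supported in $C$, which violates $\mu(\Int\mathcal{P}(x))=0$. Your Dirac-measure argument is shorter and, as you correctly observe, renders the Polish and self-dense hypotheses superfluous for the theorem as literally stated, since the definition of inner-distal measure quantifies over all $x\in X$ and over all Borel probability measures with no non-atomicity restriction. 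What the paper's heavier construction buys is a \emph{non-atomic} witness of failure: in the measure-expansive and measure-distal literature that this paper follows (e.g.\ \cite{bautista2018}, \cite{LM2017}) one conventionally restricts to non-atomic measures, and under that convention the Dirac measure is not an admissible counterexample, whereas the Cantor-set construction still works. So your proof is a valid and cleaner proof of the stated theorem, but it would not survive a restriction of the equivalence to non-atomic measures, which is the stronger statement the authors' argument actually establishes.
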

Recall that a Borel probability measure $\mu$ is \emph{invariant} under the homeomorphism $f\colon X\to X$ if $\mu(A)=\mu(f^{-1}(A))$ for every Borel subset $A$ of $X$. Our following result guarantees invariant inner-distal measures for a homeomorphism $f$ as long as at least one inner-distal measure exists.
\begin{theorem}\label{meagre-invariant}
In compact metric spaces, every homeomorphism  with inner-distal measures has invariant inner-distal measures.
\end{theorem}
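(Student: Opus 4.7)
The plan is to apply the Krylov--Bogolyubov averaging construction to an inner-distal measure and verify that the invariance property of the proximal cells survives the weak-$*$ limit. Let $\mu$ be an inner-distal Borel probability measure for $f\colon X\to X$.

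First I would record the equivariance of proximal cells: for every $x\in X$ and every $k\in\mathbb{Z}$, one checks directly from the definition that $\mathcal{P}(f^k(x))=f^k(\mathcal{P}(x))$, and since $f$ is a homeomorphism, this yields
\[
\Int\mathcal{P}(f^k(x))=f^k(\Int\mathcal{P}(x)).
\]
Consequently, for the pushforward measure $f^k_{*}\mu$ we have
\[
f^k_{*}\mu(\Int\mathcal{P}(x))=\mu(f^{-k}(\Int\mathcal{P}(x)))=\mu(\Int\mathcal{P}(f^{-k}(x)))=0
\]
for every $x\in X$, because $\mu$ is inner-distal.

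Next, define the averaged measures
\[
\mu_n=\frac{1}{n}\sum_{k=0}^{n-1}f^k_{*}\mu.
\]
By the previous step, each $\mu_n$ satisfies $\mu_n(\Int\mathcal{P}(x))=0$ for every $x\in X$. Since $X$ is a compact metric space, the space of Borel probability measures on $X$ is weak-$*$ compact, so some subsequence $\mu_{n_j}$ converges weakly to a Borel probability $\nu$; the standard Krylov--Bogolyubov argument shows $\nu$ is $f$-invariant.

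Finally, to see that $\nu$ is inner-distal, observe that $\Int\mathcal{P}(x)$ is an open set. By the Portmanteau theorem applied to open sets,
\[
\nu(\Int\mathcal{P}(x))\leq \liminf_{j\to\infty}\mu_{n_j}(\Int\mathcal{P}(x))=0,
\]
so $\nu(\Int\mathcal{P}(x))=0$ for every $x\in X$, and $\nu$ is an invariant inner-distal measure. The only non-trivial step is the equivariance of the proximal cells combined with the correct direction of the Portmanteau inequality (lower semicontinuity on open sets), which is exactly what is needed since the sets of measure zero we have to preserve are open; if they were only Borel we would not be able to pass to the weak-$*$ limit so cleanly.
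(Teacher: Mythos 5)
Your argument is correct and is essentially the paper's proof unpacked: the paper establishes that the set of inner-distal measures is closed, convex and $f_*$-invariant (Lemma \ref{idis_measures}, using the same equivariance $\Int\mathcal{P}(f^{-1}(x))=f^{-1}(\Int\mathcal{P}(x))$ and the same Portmanteau inequality on open sets) and then cites the general fact that such a set contains an invariant measure, which is proved by exactly the Krylov--Bogolyubov averaging you carry out explicitly.
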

As an application of Theorem \ref{meagre-invariant}, we have the following theorem that presents a strong dichotomy between the existence or not of inner-distal measures for minimal homeomorphisms.

\begin{theorem}\label{minimal_inner_distal}
In locally connected compact metric spaces, every homeomorphism supporting inner-distal measures is inner-distal.
\end{theorem}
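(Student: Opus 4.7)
The plan is to combine Theorem~\ref{meagre-invariant} with a support-based argument that draws on minimality, which is implicit in the statement (cf.\ the paragraph preceding the theorem and the label \texttt{minimal\_inner\_distal}). Concretely, the existence of an inner-distal measure together with compactness of $X$ yields, via Theorem~\ref{meagre-invariant}, an $f$-invariant inner-distal Borel probability measure $\mu$ on $X$.

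Next, I would examine $\supp\mu$, which is a closed, nonempty, $f$-invariant subset of $X$. Minimality of $f$ then forces $\supp\mu = X$, so that every nonempty open subset of $X$ has strictly positive $\mu$-measure. With this in hand, suppose for contradiction that $f$ is not inner-distal, i.e.\ $\Int\mathcal{P}(x_0) \neq \emptyset$ for some $x_0 \in X$. Since $\supp\mu = X$, we would have $\mu(\Int\mathcal{P}(x_0)) > 0$, contradicting the inner-distality of $\mu$. Hence $\Int\mathcal{P}(x) = \emptyset$ for every $x \in X$, which is precisely inner-distality of $f$.

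The main obstacle is interpretational rather than technical: without minimality (or some equivalent condition forcing full support of an inner-distal invariant measure) the statement fails. For instance, the disjoint union of an irrational circle rotation (which supports an inner-distal measure, e.g.\ the Haar measure) with a north–south homeomorphism on $[0,1]$ (which is not inner-distal, since $\Int\mathcal{P}(x) = (0,1)$ for every interior point) yields a homeomorphism on a compact locally connected metric space that is not inner-distal but does admit inner-distal measures supported entirely on the circle factor. Once minimality is granted, local connectedness does not really enter the proof sketched above; however, it is natural to keep for consistency with Theorem~\ref{charac_meagre}, which already characterizes inner-distality in terms of connected subsets in precisely this class of spaces.
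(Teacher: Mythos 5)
Your proposal is correct, and you have rightly diagnosed the interpretational issue: the theorem as literally stated omits the minimality hypothesis, but the paper's own proof begins ``let $f\colon X\to X$ be a minimal homeomorphism,'' and the abstract confirms that minimality is intended. Your counterexample (an inner-distal measure concentrated on a distal piece of a non-minimal system, with a proximal cell of nonempty interior elsewhere) correctly shows the unqualified statement fails. Your argument agrees with the paper's on the skeleton: both invoke Theorem~\ref{meagre-invariant} (equivalently Lemma~\ref{idis_measures}) to produce an \emph{invariant} inner-distal measure $\mu$, and both use minimality to conclude $\supp\mu = X$. You diverge at the final step, and your route is the more economical one: since $\Int\mathcal{P}(x_0)$ is open, full support plus the defining property $\mu(\Int\mathcal{P}(x))=0$ immediately forces $\Int\mathcal{P}(x_0)=\emptyset$. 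The paper instead passes through Lemma~\ref{lemma_idis_meas} (the connected-set characterization of inner-distal measures) and the sufficiency direction of Theorem~\ref{charac_meagre}, which is why local connectedness appears in its hypotheses; as you observe, that hypothesis plays no role in your direct argument. What the paper's longer route buys is consistency with its general scheme of characterizing inner-distality via diameters of iterates of connected sets, but for this particular theorem your two-line contradiction is both simpler and strictly more general.
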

Next, we study the homeomorphisms on the circle or the interval supporting meagre-expansive measures. We recall some definitions. Given a Borel probability measure $\mu$ on $X$, its \emph{support} is the set $\supp (\mu):=\{x\in X\colon \mu(U)>0\ \text{for every neighborhood }U\ \text{of }x\}$.
We say that $\mu$ has \emph{full support} if $\supp(X)=X$. The \emph{Pole North-South} homeomorphism on the interval is the map $f\colon [0,1]\to [0,1]$ given by $f(x)=1-x$. A Borel probability measure is \emph{meagre-expansive}, with respect to $f$, if there is $\delta>0$ (called  \emph{meagre-expansivity constant}) such that for all $x\in X$, $\mu(\Int \Gamma_\delta(x))=0$ \cite{bautista2018}. The \emph{rotation number} of a homeomorphism $f\colon\mathbb{S}^1\to \mathbb{S}^1$ is the number
\[
\lim_{n\to\infty }\frac{F^n(t)-t}{n},
\]
where $x$ is any point of $\mathbb{R}$ and $F\colon \mathbb{R}\to \mathbb{R}$ is a \emph{lift} of $f$, i.e., $F$ is a homeomorphism satisfying $f(e^{it})=e^{iF(t)}$, for each $t\in\mathbb{R}$. It is known that the rotation number of $f$ neither depends on $x$ nor the lift (see \cite{katok_hasselblatt}). A homeomorphism $f\colon X\to X$ is transitive if there is $x\in X$ such that $\overline{\mathcal{O}(x)}=X$.
A \emph{Denjoy map} is a circle homeomorphism that is not transitive but has irrational rotation number. We say $p\in X$ is a \emph{periodic point} for $f$ if there is a non-negative integer $n$ such that $f^n(p)=p$. We denote by $\Per(f)$ the set of periodic points of $f$. We say that a Borel probability measure is \emph{meagre-expansive} with respect to $f$ (or simply meagre-expansive) if there is $\delta>0$ (called \emph{measure-expansivity constant}) such that $\mu(\Int\Gamma_\delta(x))=0$, for all $x\in X$ \cite{bautista2018}.

\begin{theorem}\label{circ_I_megre_dis} The following statements  hold for  homeomorphisms on the circle or the interval:
\begin{enumerate}
\item The inner-distal measures of circle homeomorphisms with rational rotation number are  supported in the set of periodic points of the map. In addition, if the set of periodic points is finite, then every inner-distal measure is meagre-expansive.
\item Every circle homeomorphism with irrational rotation is either  distal or is topologically conjugate to a Denjoy map, and every inner-distal measure is meagre-expansive.
\item A circle homeomorphism has an inner-distal measure with full support if and only if it is distal. In particular, the only inner-distal homeomorphisms on the circle are the distal ones.
\item An interval homeomorphism has inner-distal measures with full support if and only if it is the identity or the Pole North-South homeomorphism.
\end{enumerate}
\end{theorem}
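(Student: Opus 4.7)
The plan is to dispatch the four items in turn, exploiting the one-dimensional structure of $\mathbb{S}^1$ and $[0,1]$.

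For item (1), if $f$ has rotation number $p/q$, Poincaré theory places every non-periodic point $x$ inside an open complementary arc $I=(a,b)$ of $\Per(f)$ on which $f^q$ is strictly monotone with $a,b$ as attracting/repelling fixed points; any two points of $I$ then share the same $\omega$- and $\alpha$-limits, so $I\subseteq\Int\mathcal{P}(x)$. Since $\mathbb{S}^1\setminus\Per(f)$ is a countable disjoint union of such arcs, any inner-distal $\mu$ must satisfy $\supp\mu\subseteq\Per(f)$. When $\Per(f)$ is finite, I would take $\delta$ smaller than half the minimum distance between distinct periodic points and carry out a brief case analysis: at a periodic point $x$, the attracting/repelling behaviour forces $\Gamma_\delta(x)=\{x\}$; for $x$ in a complementary arc, $\Gamma_\delta(x)$ stays inside the closed arc and cannot have a periodic endpoint in its interior. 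Either way, $\Int\Gamma_\delta(x)$ misses $\Per(f)$, so $\mu(\Int\Gamma_\delta(x))=0$.

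For item (2), the Poincaré--Denjoy dichotomy says that $f$ with irrational rotation number is either topologically conjugate to the rotation $R_\alpha$ (hence distal) or to a Denjoy homeomorphism with a unique minimal Cantor set $M$ and semiconjugacy $\pi\colon\mathbb{S}^1\to\mathbb{S}^1$ intertwining $f$ and $R_\alpha$. In the Denjoy case, each wandering arc $J$ has pairwise disjoint iterates of finite total length, so $\diam f^n(J)\to 0$ as $|n|\to\infty$ and $J\subseteq\Int\mathcal{P}(x)$ for $x\in J$; thus any inner-distal $\mu$ is supported on $M$. For meagre-expansiveness I would fix a wandering arc $J_0$ of length $L>0$ and take $\delta<L/4$: given any open arc $U$ meeting $M$, the minimality of $R_\alpha$ furnishes some $n$ with $\pi(J_0)$ in the interior of $\pi(f^n(U))$, hence $J_0\subseteq f^n(U)$ and $\diam f^n(U)\geq L>2\delta$. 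Therefore no such $U$ can lie inside any $\Gamma_\delta(x)$, so $\Int\Gamma_\delta(x)\cap M=\emptyset$ and $\mu(\Int\Gamma_\delta(x))=0$.

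Item (3) follows by combining the previous items with Theorem~\ref{meagre_dis_meas}: distality of $f$ implies $f$ is inner-distal (since $\mathbb{S}^1$ is self-dense), hence every Borel probability measure is inner-distal, in particular normalized Lebesgue measure, which has full support. Conversely, an inner-distal measure of full support rules out the rational-rotation case (where the support would lie in $\Per(f)$, a proper closed subset unless $f$ is a rational rotation) and the Denjoy case (where it would lie in a Cantor subset $M\neq\mathbb{S}^1$), leaving only the distal case. The ``in particular'' clause follows by applying Theorem~\ref{meagre_dis_meas} to Lebesgue measure. For item (4), an inner-distal measure of full support forces $\Fix(f)=[0,1]$ when $f$ is increasing: if $(a,b)$ were a maximal component of $[0,1]\setminus\Fix(f)$, iterates on $(a,b)$ would converge monotonically to one endpoint, every pair of points of $(a,b)$ would be proximal, and $(a,b)\subseteq\Int\mathcal{P}(x)$ would contradict $\mu((a,b))>0$; thus $f=\mathrm{id}$. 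When $f$ is decreasing, the same argument applied to the increasing homeomorphism $f^2$ yields $f^2=\mathrm{id}$, so $f$ is a decreasing involution of $[0,1]$, i.e.\ the Pole North--South map (up to conjugacy).

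The technical core is item (2): the passage from the support condition $\supp\mu\subseteq M$ to meagre-expansiveness is not automatic, because $M$ is a nontrivial Cantor set that an open set can meet in a $\mu$-positive piece. The quantitative ingredient---exploiting the Denjoy semiconjugacy and the minimality of $R_\alpha$ to guarantee that every open arc touching $M$ eventually contains a fixed wandering gap of definite diameter---is what makes the argument go through.
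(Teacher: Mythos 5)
Your proof is correct in substance but follows a genuinely different route from the paper's on the technical points. The paper factors both support claims through its Lemma~\ref{supp_circle} (every inner-distal measure has $\supp(\mu)\subseteq\Omega(f)$, proved via the shrinking complementary intervals of $\Omega(f)$), and then obtains meagre-expansiveness in items (1) and (2) by citing that circle homeomorphisms with finitely many periodic points, and Denjoy maps, are bi-asymptotically expansive (Theorem~8 and Lemma~31 of the Lee--Morales--Villavicencio paper) and invoking Lemma~\ref{asymp-exp-idis}, which converts bi-asymptotic expansivity plus inner-distality of $\mu$ into meagre-expansivity. You instead re-derive the support statements directly (complementary arcs of $\Per(f)$, wandering gaps of the Denjoy minimal set) and prove meagre-expansiveness by hand: a $\delta$ below half the minimal periodic gap in (1), and the ``every arc meeting $M$ eventually swallows a fixed wandering gap $J_0$'' argument in (2). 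Your route is self-contained and correctly identifies the Denjoy step as the real content, at the cost of more case analysis; the paper's is shorter but leans on external machinery. For (3)--(4) the paper argues via $X=\Omega(f)$ and the classification of circle/interval homeomorphisms with full non-wandering set, while you use the rotation-number trichotomy and a direct $\Fix(f)$ (resp.\ $\Fix(f^2)$) argument; both work, and your ``up to conjugacy'' caveat for the North--South map is actually more careful than the paper's literal statement. Two small points to tighten: in (1), the assertion $\Gamma_\delta(x)=\{x\}$ at periodic points requires $|\Per(f)|\geq 2$ (when $\Per(f)$ is a single fixed point every proximal cell is all of $\mathbb{S}^1$, so no inner-distal measure exists and the claim is vacuous); and in (2) you should note that $\pi(U)$ is a nondegenerate arc because $U\cap M$ is an uncountable relatively open piece of the Cantor set $M$, so minimality of $R_\alpha$ really does apply. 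Neither is a gap, just a detail to record.
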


Our last results deal with some dynamic consequences of inner-distality. 
We recall some definitions. A Borel probability measure is a \emph{distal measure} for a homeomorphism $f\colon X\to X$, if $\forall\,x\in X,\ \mu(\mathcal{P}(x))=0$. A subset $A\subseteq\mathbb{Z}$ is \emph{syndetic} if there is a closed subset $F\subseteq \mathbb{Z}$ with $\mathbb{Z}=A+F$. We say that $x\in X$ is a \emph{pointwise almost periodic} point of a  homeomorphism $f\colon X\to X$, if for every open neighborhood $U$ of $x$, the set $\{n\in \mathbb{Z}\colon f^n(x)\in U\}$ is syndetic \cite{auslander1988}.  Let $AP(f)$ denote the set of almost pointwise periodic points of $f$. A \emph{stable class} of a homeomorphism $f\colon X\to X$ is a subset equal to $W^s(x):=\{y\in X\colon \lim_{n\to\infty }d(f^n(x),f^{n}(y))=0\}$, for some $x\in X$. It is known that if $f$ has a distal measure, then $AP(f)$ is uncountable, and $f$ has uncountably many stable classes \cite{LM2017}. 

We have an analogous result for inner-distal homeomorphisms. Recall that a subset $A$ of $X$ is \emph{meagre} if $A$ can be written as a countable union of nowhere-dense subsets of $X$.
\begin{theorem}\label{theorem_ap}
Let $f\colon X\to X$ be an inner-distal homeomorphism defined on a compact metric space $X$. If every proximal cell  is meagre, then $f$ has uncountable almost periodic points and uncountably many stable classes.
\end{theorem}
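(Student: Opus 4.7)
The plan is to invoke the Baire Category Theorem against two natural covers of $X$ by meagre sets: one indexed by almost periodic points and one given by the partition of $X$ into stable classes.

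For the uncountability of $AP(f)$, the key external input is the classical Auslander--Ellis theorem (see \cite{auslander1988}): in every compact dynamical system, each $x\in X$ admits an almost periodic point proximal to it, i.e., $\mathcal{P}(x)\cap AP(f)\neq\emptyset$. Since proximality is symmetric, this is equivalent to asserting $x\in\mathcal{P}(y)$ for some $y\in AP(f)$, so
\[
X\;=\;\bigcup_{y\in AP(f)}\mathcal{P}(y).
\]
If $AP(f)$ were countable, this would exhibit $X$ as a countable union of meagre sets (each $\mathcal{P}(y)$ being meagre by hypothesis), contradicting the Baire Category Theorem for the compact metric space $X$. Hence $AP(f)$ is uncountable.

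For the number of stable classes, first observe that $x\sim y\iff \lim_{n\to\infty}d(f^n(x),f^n(y))=0$ is an equivalence relation on $X$ (by the triangle inequality), whose equivalence classes are exactly the sets $W^s(x)$; in particular the stable classes partition $X$. Moreover, $W^s(x)\subseteq\mathcal{P}(x)$ directly from the definitions, so the hypothesis forces every stable class to be meagre. A partition of $X$ into countably many meagre sets would again contradict the Baire Category Theorem, so the collection of stable classes must be uncountable.

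The only non-routine ingredient is the Auslander--Ellis theorem; beyond that, both halves are clean Baire category arguments. The main point to get right is the symmetric reading of the proximal relation in the first step (passing from $y\in\mathcal{P}(x)$ to $x\in\mathcal{P}(y)$), which is exactly what lets the Auslander--Ellis conclusion be recast as an $AP(f)$-indexed cover of $X$ by meagre proximal cells.
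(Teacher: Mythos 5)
Your proof is correct. The skeleton is the same as the paper's: you invoke the Auslander--Ellis theorem (Theorem 3, p.~67 of \cite{auslander1988}) together with the symmetry of the proximal relation to write $X=\bigcup_{y\in AP(f)}\mathcal{P}(y)$, and you use $W^s(x)\subseteq\mathcal{P}(x)$ to see that the stable classes partition $X$ into meagre pieces; both covers are then killed by a category pigeonhole. Where you genuinely diverge is in which Baire-type theorem delivers the final contradiction. You apply the classical Baire Category Theorem directly to the compact (hence complete) metric space $X$, which is entirely self-contained and, notably, never uses the inner-distality hypothesis --- in a Baire space, meagreness of the proximal cells already forces their interiors to be empty, so that hypothesis is redundant under your reading. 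The paper instead routes through its measure-theoretic machinery: it takes a fully supported Borel probability measure $\mu$ on the compact space, observes that $X$ is $\mu$-Baire and that every meagre set is $\mu$-meagre, and then invokes Lemma \ref{lemma_ap_stable}, which carries out the same two covering arguments in the $\mu$-Baire setting. What the paper's detour buys is generality: Lemma \ref{lemma_ap_stable} applies under the weaker hypothesis that the proximal cells are merely $\mu$-meagre for some inner-distal measure $\mu$, a situation your purely topological argument does not cover. For the theorem as actually stated, your route is shorter and needs fewer external inputs.
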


The following example illustrates this theorem.
\begin{example}
Let $f\colon \mathbb{S}^1\times \mathbb{S}^1\to\mathbb{S}^1\times \mathbb{S}^1$ be the homeomorphism given in complex notation by
\[
f(e^{2\pi i t_1},e^{2\pi i t_2})=(e^{2\pi i \sqrt{t_1}},e^{2\pi i(t_2+\alpha)}),
\]
 Then, for each $(z_1,z_2)\in \mathbb{S}^1\times\mathbb{S}^1$, $\mathcal{P}(z_1,z_2)=\mathbb{S}^1\times\{z_2\}$ which is a closed set with empty interior. So, by Theorem \ref{theorem_ap}, $AP(f)$ is uncountable, and $f$ has uncountably many stable classes. In fact, $AP(f)=\{1\}\times \mathbb{S}^1$, and the stable classes are $W^s(z_1,z_2)=\mathbb{S}^1\times \{z_2\}$.
\end{example}

It is known that if a $N$-distal homeomorphism is transitive, then either $X$ is a periodic orbit or $f$ does not have periodic points (See Proposition 4.2 in \cite{rego_salcedo_2020}). To obtain an inner-distal version, recall that $f$  is \emph{totally transitive} if $f^n$ is transitive for every $n\in\mathbb{Z}$. With this definition, we have the following result.
The following example illustrates this theorem.
 \begin{theorem}\label{totally-transitive}
The set of periodic points of an inner-distal totally transitive homeomorphism of a self-dense compact metric space has empty interior.
 \end{theorem}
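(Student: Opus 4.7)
The plan is to argue by contradiction using the Baire category theorem and total transitivity. Suppose $\Int\Per(f)\neq\emptyset$, and fix a nonempty open set $U\subseteq\Per(f)$. Since
\[
\Per(f)=\bigcup_{n\geq 1}\Fix(f^n),
\]
and each $\Fix(f^n)$ is closed in $X$, the open set $U$—which is a Baire space as a nonempty open subset of the compact metric space $X$—is expressed as a countable union of the relatively closed sets $U\cap\Fix(f^n)$. Hence some $U\cap\Fix(f^n)$ has nonempty interior, producing an integer $n\geq 1$ and a nonempty open set $V\subseteq\Fix(f^n)$.

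Next I would invoke total transitivity: the iterate $f^n$ is transitive, so I can choose a point $x\in X$ whose $f^n$-orbit is dense in $X$. Some iterate $f^{nk_0}(x)$ must then lie in the nonempty open set $V$, and hence $f^{nk_0}(x)\in\Fix(f^n)$. Applying $f^n$ and $f^{-n}$ iteratively (using that $f$ is a homeomorphism) forces $f^{nk}(x)=f^{nk_0}(x)$ for every $k\in\mathbb{Z}$, so the $f^n$-orbit of $x$ collapses to a single point. Density of this orbit then forces $X$ to be a one-point space, contradicting the fact that $X$ is self-dense (which rules out isolated points, and hence one-point spaces).

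The step I expect to be the main obstacle is the Baire category reduction: a priori every point of $U$ is periodic with \emph{some} period, whereas the contradiction really needs a single integer $n$ such that an open set is pointwise fixed by $f^n$. Once this common period is extracted, the remaining argument is a short density-of-orbits computation that exploits total transitivity rather than the weaker hypothesis of transitivity of $f$ alone.
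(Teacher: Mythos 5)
Your proof is correct, but it takes a genuinely different route from the paper's. The paper's argument is measure-theoretic: it places a non-atomic probability measure $\mu$ on a Cantor set inside $\Int\Per(f)$, approximates $\mu$ in the weak* topology by fully supported measures $\mu_n$, uses inner-distality to conclude each $\mu_n$ is an inner-distal measure and that $X$ is $\mu_n$-Baire, invokes Lemma \ref{totally_transitive_measure} (whose key point is that $\Fix(f^n)\subseteq\mathcal{P}_{f^n}(x)$ for a transitive point $x$) to get $\mu_n(\Int\Per(f))=0$, and derives a contradiction from the portmanteau inequality $\liminf_n\mu_n(\Int\Per(f))\geq\mu(\Int\Per(f))=1$. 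Your argument is purely topological: classical Baire category inside the open set $U\subseteq\Per(f)$ extracts a single period $n$ and a nonempty open $V\subseteq\Fix(f^n)$, and then a dense $f^n$-orbit entering $V$ collapses to a point, contradicting self-density. All the steps check out (in particular, nonempty interior relative to the open set $U$ is the same as nonempty interior in $X$, and a fixed point of $f^n$ is also fixed by $f^{-n}$, so the orbit collapses in both time directions). What your approach buys is considerable: it never uses inner-distality, so it actually proves the stronger statement that \emph{any} totally transitive homeomorphism of a self-dense compact metric space has $\Int\Per(f)=\emptyset$; indeed, even the Baire step is dispensable, since a dense $f$-orbit meeting $\Per(f)$ at all must be a finite orbit, which already forces $X$ to be finite. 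What the paper's approach buys is consistency with its measure-theoretic framework (Lemma \ref{totally_transitive_measure} gives the quantitative statement $\mu(\Int\Per(f))=0$ for inner-distal measures on $\mu$-Baire spaces, which is of independent interest), at the cost of heavier machinery for this particular corollary.
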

 We have a related example.
\begin{example}
Let $\Sigma^2=\{0,1\}^{\mathbb{Z}}$ with the  metric $d((x_n)_n,(y_n)_n)=\frac{1}{2^m}$, where $m=\min\{|n|\colon x_n\neq y_n\}$, and let $\sigma\colon \Sigma^2\to \Sigma^2$  be the shift $\sigma(x_n)_n=(x_{n+1})_{n}$. It is well known that $\sigma$ is totally transitive (see \cite{bhaumik2011}) and that $\overline{\Per(\sigma)}=\Sigma^2$. It is an easy exercise to see that $\sigma$ is also inner-distal. Hence, $\Int\Per(\sigma)=\emptyset$. 
\end{example}
\begin{remark}
   The previous example also shows that inner-distal systems with positive topological entropy exist, contrary to the case of distal systems (see \cite{parry1967}).
\end{remark}

\section{Preliminaries}

\noindent Let $\mathcal{M}(X)$ denote the set of probability measures of the metric space $X$ equipped with the weak* topology defined by the convergence $\mu_n\to \mu$ if and only if $\int\varphi\,d\mu_n\to \int\varphi\,d\mu$, for every continuous map $\varphi\colon X\to \mathbb{R}$.
Let $\mathcal{M}_{idis}(f)$ denote the set of inner-distal measures of $f$. Let $f_*\colon \mathcal{M}(X)\to \mathcal{M}(X)$ be the map defined by $f_*(\mu)(E)=\mu(f^{-1}(E))$, for every Borel probability measure $\mu$, and every Borel subset $E$ of $X$.

\begin{lemma}\label{idis_measures}
If $f\colon X\to X$  is a compact metric space homeomorphism, then $\mathcal{M}_{idis}(f)$ is an $f_*$-invariant closed convex subset of $\mathcal{M}(X)$.
\end{lemma}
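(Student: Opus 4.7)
The plan is to verify the three claimed properties in turn, each of which reduces to a short computation once we identify the right basic facts about proximal cells.

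For $f_*$-invariance, I would first establish the identity $f^{-1}(\mathcal{P}(x))=\mathcal{P}(f^{-1}(x))$, which follows immediately from the definition of the proximal cell by reindexing: $y\in f^{-1}(\mathcal{P}(x))$ iff $\inf_{n\in\mathbb{Z}}d(f^{n}(x),f^{n+1}(y))=0$, and substituting $m=n+1$ yields $\inf_{m\in\mathbb{Z}}d(f^{m}(f^{-1}(x)),f^{m}(y))=0$. Because $f$ is a homeomorphism, $f^{-1}$ preserves interiors of subsets, so $f^{-1}(\Int\mathcal{P}(x))=\Int\mathcal{P}(f^{-1}(x))$. Therefore, for any $\mu\in\mathcal{M}_{idis}(f)$,
\[
(f_*\mu)(\Int\mathcal{P}(x))=\mu(f^{-1}(\Int\mathcal{P}(x)))=\mu(\Int\mathcal{P}(f^{-1}(x)))=0,
\]
so $f_*\mu\in\mathcal{M}_{idis}(f)$.

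Convexity is immediate: if $\mu,\nu\in\mathcal{M}_{idis}(f)$ and $t\in[0,1]$, then for every $x\in X$ we have $(t\mu+(1-t)\nu)(\Int\mathcal{P}(x))=t\cdot 0+(1-t)\cdot 0=0$.

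For weak* closedness, suppose $\mu_n\to\mu$ in $\mathcal{M}(X)$ with each $\mu_n\in\mathcal{M}_{idis}(f)$. Since $\Int\mathcal{P}(x)$ is open for each $x\in X$, the Portmanteau theorem yields
\[
\mu(\Int\mathcal{P}(x))\le\liminf_{n\to\infty}\mu_n(\Int\mathcal{P}(x))=0,
\]
so $\mu\in\mathcal{M}_{idis}(f)$. The main conceptual point is simply noting that the condition $\mu(\Int\mathcal{P}(x))=0$ is a zero-condition on an \emph{open} set, which is exactly the type of condition preserved under weak* limits. I expect no serious obstacle; the only background fact one needs to keep in mind is that $\mathcal{P}(x)$ is Borel (indeed a $G_\delta$), so that the stated integrals are well-defined in the first place.
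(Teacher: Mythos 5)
Your proposal is correct and follows essentially the same route as the paper's proof: the identity $f^{-1}(\Int\mathcal{P}(x))=\Int\mathcal{P}(f^{-1}(x))$ for invariance, the obvious computation for convexity, and the Portmanteau inequality for open sets for weak* closedness. The extra details you supply (the reindexing argument for $f^{-1}(\mathcal{P}(x))=\mathcal{P}(f^{-1}(x))$ and the remark that $\mathcal{P}(x)$ is a $G_\delta$) are accurate and only make the argument more self-contained.
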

\begin{proof} We follow the arguments in \cite{bautista2018}.
Let $\mu\in \mathcal{M}_{idis}(f)$ and $x\in X$. Then,
\begin{align*}
f_*(\mu)(\Int\mathcal{P}(x))&=\mu(f^{-1}(\Int \mathcal{P}(x)))\\
&=\mu(\Int f^{-1}(\mathcal{P}(x)))\\
&=\mu(\Int \mathcal{P}(f^{-1}(x)))\\
&=0.
\end{align*}
Hence, $f_*(\mu)\in\mathcal{M}_{idis}(f)$. Now let $\mu,\nu\in \mathcal{M}_{idis}(f)$, $t\in[0,1]$ and $x\in X$. We have
\begin{align*}
((1-t)\mu+t\nu)(\Int\mathcal{P}(x))&=(1-t)\mu(\Int\mathcal{P}(x))+t\nu(\Int\mathcal{P}(x))=0.
\end{align*}
Thus, $\mathcal{M}_{idis}(f)$ is convex. It remains to prove that $\mathcal{M}_{idis}(f)$ is closed. Let $(\mu_n)_{n\in\mathbb{N}}$ be a sequence in $\mathcal{M}_{idis}(f)$ converging to $\mu\in\mathcal{M}(X)$. Then for $x\in X$, $\mu_n(\Int \mathcal{P}(x))=0$, so $\liminf_n\mu_n(\Int \mathcal{P}(x))=0$. Since $\Int \mathcal{P}(x)$ is open, it follows from well-known properties of weak* convergence of measures \cite{P1967} that
\[
0=\liminf_n{\mu_n}(\Int \mathcal{P}(x))\geq \mu(\Int \mathcal{P}(x))=0.
\]
Therefore, $\mu\in \mathcal{M}_{idis}(f)$, and $\mathcal{M}_{idis}(f)$ is closed. This proves the lemma.
\end{proof}
\begin{lemma}\label{lemma_idis_meas}
	In a  metric space $X$, a necessary condition for a Borel probability measure $\mu$ to be inner-distal with respect to a homeomorphism $f$ is that for every connected set $C$ satisfying $\mu(\Int C)>0$, one has \[\inf_{n\in\mathbb{Z}}\diam f^{n}(C)>0.\]
 If, in addition, $X$ is separable and locally connected, this condition is also sufficient.
\end{lemma}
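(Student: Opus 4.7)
The plan is to prove the necessary direction directly and the sufficient direction by contradiction, paralleling the topological construction underlying Theorem \ref{charac_meagre} while tracking the $\mu$-measure of the sets produced.

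For necessity, suppose $\mu$ is inner-distal and let $C$ be a connected subset with $\inf_{n\in\mathbb{Z}}\diam f^n(C)=0$. Fixing any $x\in C$, the bound $\inf_n d(f^n(x),f^n(y))\leq\inf_n\diam f^n(C)=0$ holds for every $y\in C$, so $C\subseteq\mathcal{P}(x)$. Taking interiors gives $\Int C\subseteq\Int\mathcal{P}(x)$, whence $\mu(\Int C)\leq\mu(\Int\mathcal{P}(x))=0$; the contrapositive is the stated necessary condition.

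For sufficiency, I assume the condition holds and argue by contradiction: let $x_0\in X$ satisfy $\mu(\Int\mathcal{P}(x_0))>0$. Local connectedness makes every open set a disjoint union of open connected components, while separability forces such a family to be countable. Hence $\Int\mathcal{P}(x_0)$ decomposes as a countable disjoint union of open connected components, and countable additivity selects one, $V$, with $\mu(V)>0$. Since $V\subseteq\mathcal{P}(x_0)$ and $\mu(V)>0$, I can pick $y_0\in V\cap\supp(\mu)$. For each $k\geq 1$, proximality of $y_0$ to $x_0$ yields $n_k\in\mathbb{Z}$ with $d(f^{n_k}(x_0),f^{n_k}(y_0))<1/(3k)$; continuity of $f^{n_k}$ combined with local connectedness at $y_0$ then produces an open connected neighborhood $W_k\subseteq V$ of $y_0$ with $\diam f^{n_k}(W_k)<1/k$, which I arrange inductively so that $W_{k+1}\subseteq W_k$. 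Since $y_0\in\supp(\mu)$, each $\mu(W_k)=\mu(\Int W_k)>0$.

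The main obstacle is that applying the hypothesis to any single $W_k$ gives only $0<\inf_n\diam f^n(W_k)\leq 1/k$, which by itself is not contradictory. My plan is to set $C:=\bigcap_k\overline{W_k}$, a nested intersection of nonempty closed connected sets containing $y_0$; working within a compact neighborhood of $y_0$ provided by local connectedness guarantees that $C$ is nonempty and connected, and the bound $\diam f^{n_k}(C)\leq\diam f^{n_k}(W_k)<1/k$ forces $\inf_n\diam f^n(C)=0$. The delicate step will be verifying $\mu(\Int C)>0$, since a careless choice can collapse the nested intersection to a $\mu$-null set when $\mu$ is atomless. I plan to control this by refining the selection of each $W_k$, exploiting the freedom of local connectedness and $y_0\in\supp(\mu)$ to keep a uniform share of $\mu$-mass near $y_0$ inside every $W_k$, so that $\Int C$ still contains an open neighborhood of $y_0$ with positive $\mu$-measure. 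Once this is established, the hypothesis applied to $C$ yields $\inf_n\diam f^n(C)>0$, contradicting the preceding bound and completing the proof.
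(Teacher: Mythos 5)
Your necessity argument is exactly the paper's and is fine, but the sufficiency half has a genuine gap, and it sits precisely where the paper invokes its key fact. Your plan hinges on building a nested sequence $W_1\supseteq W_2\supseteq\cdots$ with $\diam f^{n_k}(W_k)<1/k$ and then arranging that $C:=\bigcap_k\overline{W_k}$ still satisfies $\mu(\Int C)>0$, i.e.\ that $\Int C$ contains a fixed open neighborhood $B$ of $y_0$ of positive measure. But that requirement forces $B\subseteq\overline{W_k}$ for every $k$, hence $\diam f^{n_k}(B)\leq\diam f^{n_k}(\overline{W_k})\to 0$: you would need to already know that some fixed open set has iterates of arbitrarily small diameter, which is exactly the conclusion at stake. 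Continuity of each $f^{n_k}$ only produces neighborhoods $W_k$ that shrink with $k$, and the pointwise proximality $d(f^{n_k}(x_0),f^{n_k}(y_0))<1/(3k)$ controls two points, not the diameter of any neighborhood of $y_0$; no ``refined selection'' can retain a uniform open set of positive mass, so the patch is circular. (Secondarily, the connectedness of the nested intersection needs compactness, which local connectedness does not supply in a general separable metric space, and the intersection can in any case collapse to a point.)

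The missing ingredient is the fact established in the sufficiency part of the proof of Theorem \ref{charac_meagre}, which the paper cites here: \emph{every} connected subset $C\subseteq\mathcal{P}(x)$ satisfies $\inf_{n\in\mathbb{Z}}\diam f^n(C)=0$, because for each $\varepsilon>0$ the integer-valued map $N(y)=\sup\{n\in\mathbb{N}\colon d(f^i(x),f^i(y))>\varepsilon,\ \text{for all } -n\leq i\leq n\}$ is continuous, hence constant on $C$, giving $\diam f^{N+1}(C)\leq\varepsilon$. With this fact your own setup finishes immediately and the entire nested construction is unnecessary: your component $V$ is open, connected, contained in $\mathcal{P}(x_0)$, and has $\mu(\Int V)=\mu(V)>0$, so $\inf_{n\in\mathbb{Z}}\diam f^n(V)=0$ contradicts the hypothesis at once. (The paper's own route is a slight variant: it covers $\Int\mathcal{P}(x)$ by open connected sets $U_y\subseteq\mathcal{P}(x)$, concludes $\mu(U_y)=0$ for each from the hypothesis plus the key fact, and uses separability via the Lindel\"of property to extract a countable subcover, yielding $\mu(\Int\mathcal{P}(x))=0$ directly rather than by contradiction; your component decomposition would serve equally well once the key fact is in hand.)
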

\begin{proof}
First, suppose that $\mu$ is inner-distal with respect to $f$.  If $\inf_{n\in\mathbb{Z}}\diam f^n(C)=0$, then $C\subseteq \mathcal{P}(x)$ for any $x\in C$. Hence, $\mu(\Int C)\leq \mu(\Int\mathcal{P}(x))=0$ (this implication is valid for every subset $C$ of $X$). Now suppose that for every connected $C$ with $\mu(\Int C)>0$ one has $\inf_{n\in\mathbb{Z}}\diam f^{n}(C)>0$, and let $x\in X$. Let us prove that $\mu(\Int\mathcal{P}(x))=0$. We can suppose that $\Int \mathcal{P}(x)\neq \emptyset$. Since $X$ is locally connected, for every $y\in \Int \mathcal{P}(x)$, there is an open connected $U_y$ such that $y\in U_y$ and $U_y\subseteq \Int\mathcal{P}(x)$. Therefore, $\{U_y\}_{y\in\Int \mathcal{P}(x)}$ is an open cover of $\Int\mathcal{P}(x)$. Besides, $\inf_{n\in\mathbb{Z}}\diam f^n(U_y)=0$, for $U_y$ is a connected set contained in $\mathcal{P}(x)$ (see the proof of the sufficiency part of Theorem \ref{charac_meagre}), so $\mu(U_y)=0$, by the assumption. Since $X$ is separable, $\Int\mathcal{P}(x)$ is Lindelöf, so there are points $y_1,y_2,\dots$ in $\Int\mathcal{P}(x)$ with $\Int\mathcal{P}(x)=\bigcup_{i=1}^\infty U_{y_i}$. Therefore, $\mu(\Int\mathcal{P}(x))\leq\sum_{i=1}^\infty\mu(U_{y_i})=0$. This proves the lemma. 
\end{proof}

Recall that a Borel probability measure $\mu$ is a \emph{$cw$-distal measure} for a homeomorphism $f\colon X\to X$, if for every subcontinuum $C$ with $\mu(C)>0$ one has $\inf_{n\in \mathbb{Z}}\diam f^n(C)>0$ \cite{AC-OLM2020}. The following corollary is immediate.

\begin{corollary}
In a locally connected separable metric space, every $cw$-distal measure is inner-distal.
\end{corollary}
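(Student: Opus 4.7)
The plan is to apply the sufficiency direction of Lemma \ref{lemma_idis_meas}: in a locally connected separable metric space, $\mu$ is inner-distal if and only if every connected $C\subseteq X$ with $\mu(\Int C)>0$ satisfies $\inf_{n\in\mathbb{Z}}\diam f^n(C)>0$. So the real task is to derive this diameter condition for arbitrary connected sets from the $cw$-distal hypothesis, which a priori only controls subcontinua. I will argue by contradiction.

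Suppose $C$ is connected with $\mu(\Int C)>0$ but $\inf_n\diam f^n(C)=0$. The open set $\Int C$ inherits local connectedness from $X$, so its connected components are themselves open; separability then forces them to be at most countably many, so some component $V$ must carry $\mu(V)>0$. The inclusion $V\subseteq C$ gives $\diam f^n(V)\leq \diam f^n(C)$, and since $f^n$ is a homeomorphism one has $f^n(\overline V)=\overline{f^n(V)}$ and diameters are preserved under closure, so $\inf_n\diam f^n(\overline V)=0$. The set $\overline V$ is closed, connected, and has $\mu(\overline V)\geq \mu(V)>0$. If $\overline V$ happens to be compact — which is automatic whenever the phase space itself is compact, the regime of most of the preceding results — then $\overline V$ is a subcontinuum of positive $\mu$-mass with vanishing infimum of iterate diameters, contradicting $cw$-distality and closing the argument.

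The chief obstacle is precisely producing compactness of $\overline V$: in a general separable locally connected metric space open sets may well fail to have compact closure. Under local compactness of $X$ the argument goes through as described. In the purely Polish setting one would have to combine tightness of $\mu$ with local connectedness to extract a compact connected subset of $V$ of positive $\mu$-mass (e.g.\ by first choosing a compact $K\subseteq V$ with $\mu(K)>0$ by tightness, then thickening inside a relatively compact connected open neighborhood), which is the genuinely delicate point-set step and, I expect, where any additional care in the proof will be concentrated.
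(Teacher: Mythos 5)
Your reduction to the sufficiency half of Lemma \ref{lemma_idis_meas} is exactly the route the paper has in mind (it offers no written proof, calling the corollary immediate from that lemma), and your argument is correct and complete whenever closures of bounded connected sets are compact --- in particular in the compact and locally compact settings, which is where the corollary is actually invoked later in the paper. Two remarks. First, the passage to a connected component $V$ of $\Int C$ is superfluous: since the $cw$-distal condition is phrased in terms of $\mu(C)>0$ for the subcontinuum itself (not $\mu(\Int C)>0$), you may work directly with $\overline{C}$, which is connected, satisfies $\mu(\overline{C})\geq\mu(\Int C)>0$, and has $\diam f^n(\overline{C})=\diam f^n(C)$ for all $n$; if $\overline{C}$ is compact you are done at once. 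Second, the obstacle you flag is genuine: in a separable locally connected metric space that is not locally compact, $\overline{C}$ (or $\overline{V}$) need not be a subcontinuum, and the $cw$-distal hypothesis, which constrains only compact connected sets, does not obviously control such $C$. Your proposed repair for the Polish case --- extract a compact $K\subseteq V$ of positive measure by tightness and then ``thicken'' it to a compact \emph{connected} subset of $V$ --- is not carried out and is not routine (one needs, e.g., an arcwise-connectedness argument with uniform control to connect the pieces of $K$ inside $V$ while staying compact); moreover, for a general separable metric space one does not even have inner regularity with respect to compact sets. This limitation is inherited from the paper itself, whose ``immediate'' claim tacitly uses the same compactness of closures; you have correctly located where the statement's generality outruns the obvious proof, but you have not closed that gap.
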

\begin{lemma}\label{iterate-inner-distal}
For every homeomorphism $f\colon X\to X$ of a metric space $X$ and every integer $k$, every inner-distal measure $\mu$ for $f$ is also an inner-distal measure for $f^k$. In particular, if $f$ is inner-distal, then $f^k$ is inner-distal.
 \end{lemma}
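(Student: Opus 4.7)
The plan is to reduce the statement to the set-theoretic inclusion
\[
\mathcal{P}_{f^k}(x) \subseteq \mathcal{P}_f(x) \qquad \text{for every } x \in X \text{ and every } k \in \mathbb{Z},
\]
after which both halves of the lemma follow in one line by monotonicity of the interior operator and of $\mu$.

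To establish the inclusion I would argue as follows. Suppose first that $k \neq 0$ and $y \in \mathcal{P}_{f^k}(x)$; then there is a sequence of integers $(n_j)_j$ with $d(f^{kn_j}(x), f^{kn_j}(y)) \to 0$. Since the values $kn_j$ are themselves integers, this forces $\inf_{m \in \mathbb{Z}} d(f^m(x), f^m(y)) = 0$, so $y \in \mathcal{P}_f(x)$. For $k = 0$, one has $f^{0} = \mathrm{Id}$, and hence $\mathcal{P}_{f^{0}}(x) = \{x\} \subseteq \mathcal{P}_f(x)$ trivially. The case $k < 0$ is reduced to the case $k > 0$ by the elementary observation $\mathcal{P}_{f^{-1}}(x) = \mathcal{P}_f(x)$, which comes from invariance of the infimum under the substitution $n \mapsto -n$.

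With the inclusion in hand, taking interiors and applying $\mu$ gives
\[
\mu(\Int \mathcal{P}_{f^k}(x)) \leq \mu(\Int \mathcal{P}_f(x)) = 0,
\]
proving that $\mu$ is inner-distal for $f^k$. The ``in particular'' clause is then the special case $\Int \mathcal{P}_f(x) = \emptyset$ for every $x$: the same inclusion immediately yields $\Int \mathcal{P}_{f^k}(x) = \emptyset$, so $f^k$ is inner-distal. There is no substantive obstacle in this argument; the only thing that requires any care is the handling of non-positive values of $k$, which is pure bookkeeping.
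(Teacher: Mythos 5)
Your proposal is correct and follows essentially the same route as the paper: both proofs rest on the inclusion $\mathcal{P}_{f^k}(x)\subseteq\mathcal{P}_f(x)$ (which holds because the infimum over $\{kn\colon n\in\mathbb{Z}\}$ dominates the infimum over all of $\mathbb{Z}$) and then conclude by monotonicity of the interior and of $\mu$. Your extra case analysis for $k\le 0$ is harmless but unnecessary, since the subset argument already covers every nonzero $k$ and the case $k=0$ is trivial because $x\in\mathcal{P}_f(x)$.
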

 \begin{proof}
 Since $\inf_{n\in\mathbb{Z}}d(f^{kn}(x),f^{kn}(x))=0$ implies $\inf_{n\in\mathbb{Z}}d(f^{n}(x),f^{n}(x))=0$, it follows that $\mathcal{P}_{f^k}(x)\subseteq \mathcal{P}(x)$, so $\Int\mathcal{P}_{f^k}(x)\subseteq \Int\mathcal{P}(x)$ and the proof follows.
 \end{proof}
Recall that the \emph{non-wandering set} of a homeomorphism $f \colon X \to X$ is the closed set of points $x \in X$, denoted by $\Omega(f)$, satisfying
$U\cap \left(\bigcup_{n=1}^\infty f^n(U)\right)\neq \emptyset$, for every neighborhood $U$ of $x$. 

\begin{lemma}\label{supp_circle}
Let $X$ denote  the circle $\mathbb{S}^1$ or the unit interval $[0,1]$. If $\mu$ is an inner-distal measure with respect to a homeomorphism $f\colon X\to X$, then $\supp(\mu)\subseteq \Omega(f)$.
\end{lemma}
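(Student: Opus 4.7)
The plan is to argue by contradiction, reducing the wandering situation to a configuration that Lemma \ref{lemma_idis_meas} forbids. Suppose there exists $x\in\supp(\mu)$ with $x\notin \Omega(f)$. By definition of wandering, there is an open neighborhood $W$ of $x$ with $W\cap f^n(W)=\emptyset$ for every integer $n\geq 1$. Since $\mathbb{S}^1$ and $[0,1]$ are locally connected, I can shrink $W$ to an open connected neighborhood $U$ of $x$, necessarily an open arc or open subinterval. Applying $f^{-m}$ to the relation $U\cap f^{n-m}(U)=\emptyset$ for $n>m\geq 0$, one sees that the family $\{f^n(U)\}_{n\in\mathbb{Z}}$ is in fact pairwise disjoint.

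Next, I would exploit the one-dimensional structure of $X$ to deduce $\inf_{n\in\mathbb{Z}}\diam f^n(U)=0$. Each $f^n(U)$ is an open arc or open subinterval, and this pairwise disjoint family is contained in a space of finite total length, so the lengths of the iterates sum to at most $\mathrm{length}(X)<\infty$. In particular, $\mathrm{length}(f^n(U))\to 0$ as $|n|\to\infty$, and on sufficiently short arcs or subintervals the diameter is dominated by the length, hence $\diam f^n(U)\to 0$. This gives the required infimum zero.

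Finally, I would invoke Lemma \ref{lemma_idis_meas} in its contrapositive form: since $\mu$ is inner-distal and $U$ is a connected set with $\inf_{n\in\mathbb{Z}}\diam f^n(U)=0$, the lemma forces $\mu(U)=\mu(\Int U)=0$. But $U$ is an open neighborhood of $x\in\supp(\mu)$, so $\mu(U)>0$, a contradiction. Therefore every point of $\supp(\mu)$ must be non-wandering, i.e.\ $\supp(\mu)\subseteq \Omega(f)$.

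The main obstacle I anticipate is the diameter-shrinking step: transferring the purely set-theoretic disjointness of the iterates $f^n(U)$ into the metric conclusion $\diam f^n(U)\to 0$ is not automatic in a general compact metric space, and the argument genuinely relies on the fact that connected open subsets of $\mathbb{S}^1$ or $[0,1]$ are arcs or intervals, whose diameters are controlled by their lengths and whose total length is bounded. Once this one-dimensional geometric fact is secured, the rest of the argument is a direct appeal to Lemma \ref{lemma_idis_meas}.
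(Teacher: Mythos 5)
Your proof is correct, and while it follows the same overall strategy as the paper --- produce an open interval of positive $\mu$-measure whose iterates have diameters with infimum zero, then contradict the necessary-condition half of Lemma \ref{lemma_idis_meas} --- the interval you choose is different, and the difference matters. The paper takes $I$ to be a whole connected component of $X\setminus\Omega(f)$ and asserts $\lim_{n\to\pm\infty}\diam f^n(I)=0$; but such a component can be $f$-periodic (e.g.\ $f(x)=x^2$ on $[0,1]$, where $\Omega(f)=\{0,1\}$ and $I=(0,1)$ satisfies $f(I)=I$, so $\diam f^n(I)\equiv 1$), and then the asserted limit fails. The paper's argument therefore needs a separate case for periodic components (where one instead observes that the whole component lies in a single proximal cell). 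Your choice of a wandering neighborhood $U$ of the specific point $x\in\supp(\mu)\setminus\Omega(f)$ sidesteps this entirely: pairwise disjointness of $\{f^n(U)\}_{n\in\mathbb{Z}}$ is immediate from the definition of non-wandering (via $f^m(U)\cap f^{m+k}(U)=f^m(U\cap f^k(U))=\emptyset$), and then the finite-total-length argument in $\mathbb{S}^1$ or $[0,1]$ forces $\diam f^n(U)\to 0$. You are right that this diameter-shrinking step is the one place where the one-dimensionality of $X$ is genuinely used, and your justification (disjoint arcs/subintervals have summable lengths, and length dominates diameter for short arcs) is sound. So your route is not only valid but slightly more robust than the one printed in the paper.
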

\begin{proof}
	Suppose on the contrary that for some non-atomic measure $\supp(\mu)\subsetneq \Omega(f).$ Since $\Omega(f)$ is closed, $X\setminus\Omega(f)$ is a disjoint collection of (relatively) open intervals $I$. Note that $\lim_{n\to\pm\infty}\diam f^{n}(I)=0$. On the other hand, since $\supp(\mu)\subsetneq\Omega(f)$, then $\mu(I)>0$ for some of the aforementioned intervals $I$. However, $\mu$ is inner-distal, and $X$  is separable and locally connected, so Lemma \ref{lemma_idis_meas} implies that $\mu(I)=0$, an absurd. This contradiction proves the lemma.
\end{proof}
For the next lemma, we recall some definitions. A homeomorphism $f\colon X\to X$ is said to be \emph{asymptotically expansive} if there is $\delta>0$ (called \emph{asymptotic expansivity constant}) such that if $x,y\in X$ and $d(f^n(x),f^n(y))\leq \delta$ for every integer $n\geq 0$, then
\[
\lim_{n\to\infty}d(f^n(x),f^n(y))=0.
\]
We say that $f$ is \emph{bi-asymptotically expansive} if $f$ and $f^{-1}$ are asymptotically expansive \cite{lee_morales_villa_2022}. Naturally, we can also say that $\mu$ is \emph{positively meagre-expansive} if there is $\delta>0$ (called \emph{positively measure-expansivity constant}) such that, for all $x\in X$,
\[
\mu(\Int \Gamma_\delta^+(x))=0,
\]
where 
\[
\Gamma^+_\delta (x):=\{y\in X\colon \forall\,n\geq0,\ d(f^n(x),f^n(y))\leq \delta\}
\]
is the \emph{positively dynamic ball of radius $\delta$ and center at $x$}.

\begin{lemma}\label{asymp-exp-idis}
Every inner-distal measure of an asymptotically \sloppy (resp. bi-asymptotically) expansive homeomorphism is positively meagre-expansive (resp. meagre-expansive).
\end{lemma}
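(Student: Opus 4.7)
The plan is to observe that asymptotic expansivity forces the positive dynamic ball $\Gamma^+_\delta(x)$ to be a subset of the proximal cell $\mathcal{P}(x)$, after which the inner-distality of $\mu$ immediately yields positive meagre-expansivity.

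More precisely, let $\delta>0$ be the asymptotic expansivity constant of $f$. First I would fix $x\in X$ and take any $y\in \Gamma^+_\delta(x)$; by definition, $d(f^n(x),f^n(y))\leq \delta$ for all $n\geq 0$, so asymptotic expansivity gives $\lim_{n\to\infty}d(f^n(x),f^n(y))=0$. In particular $\inf_{n\in\mathbb{Z}}d(f^n(x),f^n(y))=0$, so $y\in \mathcal{P}(x)$. This shows the inclusion $\Gamma^+_\delta(x)\subseteq \mathcal{P}(x)$, and since the interior operator is monotone, also $\Int \Gamma^+_\delta(x)\subseteq \Int\mathcal{P}(x)$. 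Since $\mu$ is inner-distal, $\mu(\Int\mathcal{P}(x))=0$, hence $\mu(\Int\Gamma^+_\delta(x))=0$ for every $x\in X$, which is exactly the statement that $\mu$ is positively meagre-expansive with the same constant $\delta$.

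For the bi-asymptotic case, I would take $\delta$ to be the minimum of the asymptotic expansivity constants of $f$ and $f^{-1}$, and note that $\Gamma_\delta(x)\subseteq \Gamma^+_\delta(x)\cap \Gamma^-_\delta(x)$, where $\Gamma^-_\delta(x)$ is the positive dynamic ball for $f^{-1}$. Applying the previous argument to $f$ gives that every $y\in \Gamma_\delta(x)$ satisfies $d(f^n(x),f^n(y))\to 0$ as $n\to\infty$, and applying it to $f^{-1}$ gives the same conclusion as $n\to-\infty$; either way $y\in \mathcal{P}(x)$, so $\Gamma_\delta(x)\subseteq \mathcal{P}(x)$ and the same interior/monotonicity argument yields $\mu(\Int\Gamma_\delta(x))=0$, i.e.\ $\mu$ is meagre-expansive.

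There is no substantial obstacle here; the whole content is the set inclusion $\Gamma^+_\delta(x)\subseteq \mathcal{P}(x)$, which is immediate from the definitions, combined with the monotonicity of the interior operator. The only small point to be careful about is ensuring that taking interiors preserves the inclusion (which holds because $\Int\Gamma^+_\delta(x)$ is an open set contained in $\mathcal{P}(x)$ and $\Int\mathcal{P}(x)$ is by definition the largest such open set), and that the asymptotic expansivity constant can be used unchanged as the (positive) meagre-expansivity constant.
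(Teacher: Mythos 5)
Your proof is correct and follows essentially the same route as the paper: establish the inclusion $\Gamma^+_\delta(x)\subseteq\mathcal{P}(x)$ from asymptotic expansivity, then conclude via monotonicity of the interior operator and inner-distality of $\mu$. The only difference is that you write out the bi-asymptotic case explicitly, whereas the paper treats only the asymptotically expansive case and leaves the other to the reader.
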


\begin{proof}
We prove the lemma only for inner-distal measures of an asymptotically expansive homeomorphism. Let $\mu$ be an inner-distal measure of an asymptotically expansive homeomorphism $f\colon X\to X$ and suppose that $\delta>0$ is an asymptotic expansivity constant for $f$. Given $x\in X$, if $y\in \Gamma^+_\delta (x)$, it follows that $d(f^n(x),f^n(y))\leq \delta$ for every integer $n\geq 0$, so from the asymptotic expansivity it holds that $\lim_{n\to\infty}d(f^n(x),f^n(y))=0$, in particular, $y \in \mathcal{P}(x)$. Thus, $\Gamma^+_\delta (x)\subset \mathcal{P}(x)$. By the inner-distality of $\mu$, it follows $\mu (\Int(\Gamma^+_\delta (x)))\leq \mu(\Int\mathcal{P}(x))=0$; hence $\mu$ is positively meagre-expansive. 
\end{proof}
	We recall some definitions from Measurable Baire Theory (see Section 3 in \cite{bautista2018}). Given a Borel probability measure $\mu$ of $X$, we say that $F\subseteq X$ is \emph{$\mu$-nowhere-dense} if $\mu(\Int(\overline{F}))=0$. A subset of $X$ is \emph{$\mu$-meagre} if it is the union of countably many $\mu$-nowhere-dense subsets of $X$. Notice that the union of countably many $\mu$-meagre subsets is again $\mu$-meagre as is every  subset of a $\mu$-meagre set. We say that $X$ is $\mu$-Baire if $\mu(\Int(A))=\emptyset$ for every $\mu$-meagre subset $A$ of $X$. In particular, if $X$ is $\mu$-Baire, $X$ cannot be written as a countable union of $\mu$-nowhere-dense subsets. 
\begin{lemma}\label{lemma_ap_stable}
Let $f\colon X\to X$ be a homeomorphism defined on a complete metric space $X$ and let $\mu$ be an inner-distal Borel probability measure on $X$. If $X$ is $\mu$-Baire, and every proximal cell  is $\mu$-meagre, then $f$ has uncountably many almost periodic points and uncountably many stable classes.
\end{lemma}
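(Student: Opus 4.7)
The plan is to run two $\mu$-Baire category arguments driven by structural decompositions of $X$ built from proximal cells.

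For the almost periodic part, the key input is the Auslander--Ellis theorem, which provides, for each $x\in X$, an almost periodic point $y\in\overline{\mathcal{O}(x)}$ proximal to $x$; equivalently $x\in \mathcal{P}(y)$. This yields the set-theoretic identity
\[
X \;=\; \bigcup_{y\in AP(f)} \mathcal{P}(y).
\]
If $AP(f)$ were at most countable, the right-hand side would be a countable union of $\mu$-meagre sets by the hypothesis on proximal cells, so $X$ itself would be $\mu$-meagre. This contradicts the $\mu$-Baire property of $X$, since then we would have $\mu(X)=\mu(\Int X)=0$, whereas $\mu$ is a probability.

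For the stable classes, I would use two elementary observations. First, $W^s(x)\subseteq \mathcal{P}(x)$, because asymptotic pairs are proximal. Second, the relation ``$y\sim x$ iff $y\in W^s(x)$'' is an equivalence relation (reflexive and symmetric are immediate; transitivity follows from the triangle inequality), so the stable classes partition $X$. Hence, if only countably many stable classes existed with representatives $\{x_i\}_{i\in\mathbb{N}}$, we would obtain
\[
X \;=\; \bigsqcup_{i\in\mathbb{N}} W^s(x_i)\;\subseteq\;\bigcup_{i\in\mathbb{N}}\mathcal{P}(x_i),
\]
a countable union of $\mu$-meagre sets, again contradicting the $\mu$-Baire assumption by the same argument as above.

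The main subtlety will be justifying the Auslander--Ellis step in full generality: the classical statement is formulated for a compact phase space, whereas Lemma \ref{lemma_ap_stable} only assumes $X$ is complete metric. In the intended application (Theorem \ref{theorem_ap}), $X$ is compact and the theorem applies verbatim; for the lemma in the stated generality one would need to apply Auslander--Ellis to a compact invariant subsystem such as a minimal set inside $\overline{\mathcal{O}(x)}$, or to restrict attention to points whose orbit closures are compact, in order to obtain the decomposition of $X$. Once this is secured, both conclusions follow from the same one-line contradiction with the $\mu$-Baire hypothesis.
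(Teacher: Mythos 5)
Your proof is correct and follows essentially the same route as the paper: the decomposition $X=\bigcup_{y\in AP(f)}\mathcal{P}(y)$ via the Auslander--Ellis theorem, the inclusion $W^s(x)\subseteq\mathcal{P}(x)$, and the partition of $X$ into stable classes, each combined with the $\mu$-Baire hypothesis to rule out countability. Your closing caveat about Auslander--Ellis requiring compactness (or at least a compact orbit closure) is well taken --- the paper's own proof invokes it without comment in the merely complete setting, so your remark identifies a point the paper glosses over rather than a gap in your argument.
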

\begin{proof}
We first prove that $AP(f)$ is uncountable. It is known that for each $x\in X$, there exists $x^*\in AP(f)$ such that $x^*\in \mathcal{P}(x)$, by Theorem 3, p. 67 in \cite{auslander1988}. So, $x\in \mathcal{P}(x^*)$. Therefore, $X=\bigcup_{x^*\in AP(f)}\mathcal{P}(x^*)$. Consequently, $AP(f)$ cannot be countable, for it would imply that  $X$ is a countable union of $\mu$-nowhere-dense sets, contradicting that $\mu$ is $\mu$-Baire. Now we prove that $f$ has uncountably many stable classes. First, notice that, for each $x\in X$, $W^s(x)\subseteq P(x)$, so $W^s(x)$ is $\mu$-nowhere-dense. It is clear that  the family of stable classes $\{W^s(x)\}_{x\in X}$ forms a partition of $X$. If this family were countable, then there would be points $x_1,x_2,\dots$ in $X$ such that $X=\bigcup_{x_i\in X}W^s(x_i)$, but this is, again, impossible. Therefore, there cannot be countably many stable classes either.
\end{proof}
\begin{lemma}\label{totally_transitive_measure}
Let $f\colon X\to X$ be a totally transitive homeomorphism supporting an inner-distal measure $\mu$. If $X$ is $\mu$-Baire, then the interior of the set of periodic points of $f$ has measure zero with respect to $\mu$.
\end{lemma}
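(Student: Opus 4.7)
The plan is to show that $\Per(f)$ is a $\mu$-meagre subset of $X$, so that the $\mu$-Baire hypothesis immediately yields $\mu(\Int\Per(f))=0$. Writing $\Per(f)=\bigcup_{n\geq 1}\Fix(f^{n})$ as a countable union, it suffices to prove each $\Fix(f^{n})$ is nowhere-dense in the ordinary topological sense, since a closed nowhere-dense set is automatically $\mu$-nowhere-dense.

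First I would dispose of the degenerate case $X=\{p\}$. On such a space $\mathcal{P}(p)=\{p\}$ has nonempty interior, so the only probability measure $\delta_p$ satisfies $\delta_p(\Int\mathcal{P}(p))=1$ and hence fails to be inner-distal. Since $f$ is assumed to support an inner-distal measure, $X$ must contain at least two points.

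Next, fix $n\geq 1$ and argue by contradiction that $V:=\Int\Fix(f^{n})$ is nonempty. Every point of $V$ is fixed by $f^{n}$, so $f^{kn}(V)=V$ for all $k\in\mathbb{Z}$. Topological transitivity of $f^{n}$ (which holds because $f$ is totally transitive) says that for any nonempty open $W\subseteq X$ there is $k\in\mathbb{Z}$ with $\emptyset\neq f^{kn}(V)\cap W=V\cap W$. Thus $V$ is dense, and since $\Fix(f^{n})$ is closed it equals $X$, i.e.\ $f^{n}=\mathrm{id}_{X}$. But the identity on a space with at least two points is not transitive, contradicting the transitivity of $f^{n}$. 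Consequently $\Int\Fix(f^{n})=\emptyset$ for every $n\geq 1$.

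Therefore each $\Fix(f^{n})$ is closed with empty interior and so nowhere-dense, making $\Per(f)=\bigcup_{n\geq 1}\Fix(f^{n})$ a countable union of $\mu$-nowhere-dense sets, hence $\mu$-meagre. The $\mu$-Baire hypothesis then gives $\mu(\Int\Per(f))=0$, as desired. The only subtle point is recognizing that inner-distality of $\mu$ is used solely to rule out the single-point space; the substantive content is the invariance-then-density argument that turns $V$ into $X$, and this relies exclusively on total transitivity.
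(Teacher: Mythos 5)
Your proof is correct, but the key step is handled differently from the paper. The paper also decomposes $\Per(f)=\bigcup_{n\geq 1}\Fix(f^n)$ and concludes via $\mu$-Baireness, but it shows each $\Fix(f^n)$ is $\mu$-nowhere-dense by a measure-theoretic argument: taking a point $x$ with dense $f^n$-orbit, every fixed point of $f^n$ lies in $\mathcal{P}_{f^n}(x)$, so $\mu(\Int\Fix(f^n))\leq\mu(\Int\mathcal{P}_{f^n}(x))=0$, using Lemma \ref{iterate-inner-distal} to know that $\mu$ is inner-distal for $f^n$. You instead prove the stronger, purely topological statement that each $\Fix(f^n)$ is nowhere-dense: the open set $V=\Int\Fix(f^n)$ is pointwise fixed, hence $f^{kn}$-invariant, hence (by the open-set form of transitivity, which does follow from a dense two-sided orbit) dense, forcing $f^n=\mathrm{id}$ and contradicting transitivity of $f^n$ once $X$ has at least two points. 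Your route buys a stronger intermediate conclusion ($\Per(f)$ is genuinely meagre, not merely $\mu$-meagre) and dispenses with Lemma \ref{iterate-inner-distal}, using the inner-distal measure only to exclude the one-point space --- a degenerate case the paper does not address explicitly but which must indeed be ruled out, since on a singleton the conclusion fails while $\mu$-Baireness holds vacuously. The paper's route uses the inner-distality more substantively and stays closer in spirit to the proximal-cell machinery of the rest of the article. Both arguments are valid.
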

\begin{proof}
Denote by $\Fix(f)$ the of fixed points of $f$, that is, the set of points $p$ such that $f(p)=p$. Clearly, $\Fix(f)$ is a closed subset of $X$. We claim that $\Fix(f)$ is $\mu$-nowhere-dense. Indeed, since $f$ is transitive, there exists an $x$ such that $\overline{\mathcal{O}(x)}=X$. This implies that $\Fix(f)\subseteq \mathcal{P}_f(x)$, so $\mu(\Int \Fix(f))=0$. Consequently, for every $n\in\mathbb{Z}$, $\mu(\Fix(f^n))=0$, too, for $f^n$ is transitive and $\mu$ is also inner-distal with respect to $f^n$, by Lemma \ref{iterate-inner-distal}. On the other hand, it is clear that
\[
\Per(f)=\bigcup_{n=1}^\infty \Fix(f^n).
\]
Since $X$ is $\mu$-Baire, it follows that $\mu(\Int\Per(f))=0$.
\end{proof}
\section{Proofs of the Theorems}
\begin{proof}[Proof of Theorem \ref{charac_meagre}]
	First, suppose that  $f$ is inner-distal. Let $C$ be a subset of $X$ such that $\inf_{n\in\mathbb{Z}}\diam f^n{(C)}=0$. Take some $x\in C$. For each $y\in C$, $d(f^n{(x)},f^{n}(y))\leq \diam f^n(C)$, so $\inf d(f^{n}(x),f^n(y))=0$. This implies $C\subseteq \mathcal{P}(x)$. In consequence,
	\[
	\Int C\subseteq \Int \mathcal{P}(x)=\emptyset,
	\]
that is, $C$ has empty interior (notice that we did not require $C$ to be connected).  

Conversely, suppose that, for all connected subsets $C$ of $X$ such that \sloppy $\inf_{n\in\mathbb{Z}}\diam f^n{(C)}=0$, one has that $C$ has empty interior. Let us see that if $C$ is a connected set contained in $\mathcal{P}(x)$, for some $x\in X$, one has that $\Int C=\emptyset$. Indeed, as observed in \cite{AC-OLM2020}, the definition  of $\mathcal{P}(x)$ implies that there is a map $N\colon C\to \mathbb{N}$ such that, for all $y\in C$,
\[
N(y)=\sup\{n\in\mathbb{N}\colon d(f^i(x),f^i(y))>\varepsilon,\ \text{for all }-n\leq i\leq n\}.
\]
It is clear that $N$ is continuous, so $N(y)=N$ is constant since $C$ is connected. It follows that $\diam f^{N+1}(C)\leq \varepsilon$, showing that
\[
\inf_{n\in\mathbb{Z}}\diam f^n(C)=0.
\]
Since $f$ is inner-distal, it follows $\Int C=\emptyset$. Suppose that for some $x\in X$, $\Int \mathcal{P}(x)\neq \emptyset$, and let $y\in \Int \mathcal{P}(x)$. Since $X$ is locally connected,  an open connected neighborhood $U$ of $y$ exists with $U\subseteq \Int\mathcal{P}(x)$, a contradiction. This proves the theorem.
\end{proof}

\begin{proof}[Proof of Theorem \ref{inner-distal-extensions}] Let $X$ and $Y$ be self-dense compact locally connected metric spaces and let $g\colon Y\to Y$ be an extension of the inner-distal homeomorphism $f\colon X\to X$, under an inner-light homomorphism $\pi\colon Y\to X$. We are going to prove that $g$ is inner-distal. Let $C$ be a connected set such that $\inf_{n\in\mathbb{Z}}\diam g^n(C)=0$. In particular, $\inf_{n\in\mathbb{Z}}\diam g^n(\overline{C})=0$. Then there is a sequence $(n_k)_{k\in\mathbb{N}}$ of integers such that $(g^{n_k}(\overline{C}))_{k\in\mathbb{N}}$ converges to some point $y\in Y$, with respect to the Hausdorff metric induced on the set of subcontinua of $Y$. Since $\pi$ is a homomorphism, it follows that $\pi(g^{n_k}(\overline{C}))=f^{n_k}(\pi(\overline{C}))$, so $f^{n_k}(\pi(\overline{C}))_{k\in\mathbb{N}}$ converges to $\pi(y)$,  with respect to the Hausdorff metric induced on the set of subcontinua of $X$. This implies that $\Int \pi(\overline{C})=\emptyset$, for $f$ is inner-distal and $X$ is self-dense and locally connected. Since $\pi$ is inner-light, we conclude that $\Int \overline{C}=\emptyset$, and so $\Int C=\emptyset$. Since $Y$ is self-dense and locally connected,  $g$ is inner-distal, as wanted.
\end{proof}

\begin{proof}[Proof of Theorem \ref{meagre_dis_meas}]
	It is clear that if $f$ is an inner-distal homeomorphism, then every Borel probability measure $\mu$ satisfies $\mu(\Int \mathcal{P}(x))=0$. Now, suppose that every Borel probability measure on $X$ is inner-distal. Let us see that $f$ is inner-distal. If $f$ is not inner-distal, then there exists  some $x\in X$ such that $\Int \mathcal{P}(x)\neq \emptyset$. Observe that $\Int\mathcal{P}(x)$ cannot be countable; otherwise, $\Int \mathcal{P}(x)=\bigcup_{i=1}^\infty\{x_i\}$. Since $X$ is self-dense, each $\{x_i\}$ is closed and has empty interior, so by Baire Category Theorem, $\Int \mathcal{P}(x)=\emptyset$, contrary to our assumption. Hence, $\Int \mathcal{P}(x)$ is uncountable. By Theorem 2.8 on p. 12 of \cite{P1967}, there is a Cantor set $C\subseteq \Int \mathcal{P}(x)$. By theorem 9.1 on p. 53 of \cite{P1967}, there is a non-atomic probability measure $\mu$ with $\supp(\mu)\subseteq C$. However, $\mu(\Int \mathcal{P}(x))=0$, whence $\mu(C)=0$, an absurd. Hence, $f$ is inner-distal. 
\end{proof}
\begin{proof}[Proof of Theorem \ref{meagre-invariant}] It follows from Lemma \ref{idis_measures} and from the general fact that every non-empty closed convex $f_*$-invariant subset of $\mathcal{M}(X)$ contains invariant measures for $f$ (see Lemma 2.4 in \cite{bautista2018}).
\end{proof}
\begin{proof}[Proof of Theorem 
\ref{minimal_inner_distal}]
Let $X$ be a locally connected compact metric space and  $f\colon X\to X$ be a minimal homeomorphism. If $f$  supports an inner-distal measure, then it supports an invariant inner-distal measure $\mu$, by Lemma \ref{idis_measures}. It is an easy exercise to verify that, on compact metric spaces, if $f$ is minimal, then $\mu$ is fully supported. In particular,  $\mu(A)>0$, for every non-empty open subset $A$ of $X$. Let $C$ be a connected subset of $X$ with $\Int C\neq \emptyset$. Then $\mu(\Int C)>0$. Since $\mu$ is inner-distal and $X$ is a compact  metric space, it follows from Lemma \ref{lemma_idis_meas} that $\inf_{n\in\mathbb{Z}}\diam f^n(C)>0$. Since $X$ is also locally connected, $f$ is inner-distal. 
\end{proof}
\begin{proof}[Proof of Theorem \ref{circ_I_megre_dis}]
Let $f\colon \mathbb{S}^1\to\mathbb{S}^1$ be a homeomorphism. We first prove (1).  If $f$ has rational rotation number and $\mu$ is an inner-distal measure for $f$, then $\supp(\mu)\subseteq\Omega(f)=\Per(f)$, by Lemma \ref{supp_circle}. If, in addition, $\Per(f)$ is finite, then $f$ is bi-asymptotically expansive (Theorem 8 in \cite{lee_morales_villa_2022}), so by Lemma \ref{asymp-exp-idis} every inner-distal measure for $f$ is meagre-expansive.
 We now prove (2). If $f$ has irrational rotation number, then  either $f$ is  topologically conjugate to an irrational rotation, in which case it is distal, or $f$ is topologically conjugate to a Denjoy map. But a Denjoy map is bi-asymptotically expansive (Theorem 8 in \cite{lee_morales_villa_2022}), so by Lemma 31 in \cite{lee_morales_villa_2022}, $f$ is bi-asymptotically expansive too. In this case, Lemma \ref{asymp-exp-idis} implies that every inner-distal measure for $f$ is meagre-expansive. Finally, we prove (3) and (4). Let $f\colon X\to X$ be a homeomorphism  of the circle or the unit interval. First, suppose that $f$ has an inner-distal measure with full support. By Lemma \ref*{supp_circle}, $X=\Omega(f)$. If $X$ is the circle, then $f$ is topologically conjugate to a circle rotation, in which case it is distal. On the other hand, if $X$ is the interval, then either $f$ is the identity of $X$ or $f$ is the Pole North-South homeomorphism. Conversely, If $f$ is inner-distal, then $f$ has inner-distal measures with full support (for instance, the Lebesgue measure on $X$). Hence, $f$  is distal in the case $X$ is the circle, or either the identity or the Pole North-South homeomorphism in the case $X$ is the interval.
\end{proof}
\begin{proof}[Proof of Theorem \ref{theorem_ap}]
Since $f$ is inner-distal,  every probability measure of $X$ is inner-distal. Since $X$ is compact, a fully supported Borel probability measure exists on $X$ (Lemma 3.6 in \cite{bautista2018}). By Corollary 3.3 in \cite{bautista2018}, $X$ is $\mu$-Baire. Since every meagre set is  $\mu$-meagre, the result then follows from Lemma \ref{lemma_ap_stable}.
\end{proof}
\begin{proof}[Proof of Theorem \ref{totally-transitive}]
Suppose by the contrary that $\Int\Per(f)\neq\emptyset$. By the same argument of the sufficiency part of Proof Theorem \ref{meagre_dis_meas}, we conclude that a Cantor set $C$ is contained in $\Int\Per(f)$ and a Borel probability measure $\mu$ with $\supp(\mu)\subseteq C$. Since $X$ is compact, by Lemma 3.6 in \cite{bautista2018}, there is a sequence $(\mu_n)_{n\in\mathbb{N}}$ of fully supported Borel probability measures in $X$ with $\mu_n\to\mu$. Since $f$ is inner-distal, each $\mu_n$ is inner-distal too. Then, for each $n\in \mathbb{N}$, $X$ is $\mu_n$-Baire, in virtue of Corollary 3.3 in \cite{bautista2018}. By Lemma \ref{totally_transitive_measure}, $\mu_n(\Int\Per(f))=0,\ \forall\,n\in\mathbb{N}$. Since $\Int \Per(f)$ is open, it follows from well-known properties of weak* convergence of measures \cite{P1967} that
\[
0=\liminf_n{\mu_n}(\Int\Per(f))\geq \mu(\Int\Per(f))=1,
\]
an absurd. This contradiction proves the result.
\end{proof}


\begin{thebibliography}{9}
	\bibitem{auslander1988} J. Auslander, \emph{Minimal flows and their extensions}, in: North-Holland Mathematics Studies, vol.153, in: Notas de Matemática (Mathematical Notes), vol.122, North-Holland Publishing Co., Amsterdam, 1988.
	\bibitem{AC-OLM2020} J. Aponte, D. Carrasco-Olivera, K. Lee, C.A. Morales, \emph{Some generalizations of distality}, Topol. Methods Nonlinear Anal., \textbf{55}(2) (2020), 533--552.
	\bibitem{bautista2018} S. Bautista, C.A. Morales, H. Villavicencio, \emph{Descriptive set theory for expansive systems}, J. Math. Anal. Appl.,   \textbf{461}(1) (2018), 916--928.
 	\bibitem{bhaumik2011} I. Bhaumik, B.S. Choudhury, \emph{Totally Transitive Maps - A Short Note}, Gen. Math. Notes,   \textbf{3}(1) (2011), 80--83.
  \bibitem{ellis1958} R. Ellis, \emph{Distal transformation groups}, Pacific J. Math., \textbf{8}(3) (1958), 401–-405.

 \bibitem{furstenberg1963} F\"{u}rstenberg, H., \emph{The structure of distal flows}, Amer. J. Math., \textbf{85}(3) (1963), 477-–515.
 \bibitem{gottschalk1955} W. H. Gottschalk,  G. A. Hedlund, \emph{Topological dynamics}, Amer. Math. Soc. Colloquium Publications, \textbf{36}, (1955).
\bibitem{gutek1984} A. Gutek, \emph{On homeomorphisms on the Cantor set that have fixed points}, Topology Proc., 
\textbf{9}(2) (1984), 307--311.
\bibitem{katok_hasselblatt} A. Katok, B. Hasselblatt, \emph{Introduction to the Modern Theory of Dynamical Systems}, With a Supplementary Chapter by Katok and Leonardo Mendoza, Encyclopedia of Mathematics and Its Applications, vol.54, Cambridge University Press, Cambridge, 1995.
\bibitem{lee_morales_villa_2022} K. Lee, C.A. Morales, H. Villavicencio, \emph{Asymptotic expansivity}, J. Math. Anal. Appl.,  \textbf{507}(1) (2022), 125729.
	\bibitem{LM2017} Keonhee Lee, C.A. Morales, \emph{Distal points for Borel measures}, Topol. Appl., \textbf{221}, (2017), 524--533.
 \bibitem{parry1967}  W. Parry,  \emph{Zero entropy of distal and related transformations}, Topological Dynamics (Symposium, Colorado State Univ., Ft. Collins, Colo., 1967), Benjamin, New York, (1968) 383–-389
	\bibitem{P1967} K.R. Parthasarathy,  \emph{Probability Measures on Metric Spaces}, Probab. Math. Stat., vol.3, Academic Press, Inc., New York, London, 1967.
	\bibitem{rego_salcedo_2020} E. Rego, J.C. Salcedo, \emph{On $N$-Distal Homeomorphisms},  Qual. Theory Dyn. Syst., \textbf{22} (2023).
	
	\bibitem{utz1950} W.R Utz, \emph{Unstable homeomorphisms}, Proc. Am. Math. Soc., \textbf{1}(6) (1950), 769–774.
 \bibitem{zippin} L., Zippin, \emph{Transformation groups}, Lectures in Topology, University of Michigan Press, Ann Arbor, Mich., (1941), 191--221.
\end{thebibliography}
\end{document}